\numberwithin{equation}{section}
\titleformat{\section}[block]{\large \bfseries\filcenter}{\thesection}{1em}{}
\titleformat{\subsection}[hang]{\bfseries}{\thesubsection}{1em}{}
\def\@seccntformat#1{\hspace*{0mm}%
  \protect\textup{\protect\@secnumfont
    \ifnum\pdfstrcmp{subsection}{#1}=0 \bfseries\fi
    \csname the#1\endcsname
    \protect\@secnumpunct
  }%
}
\newtheorem{proposition}{Proposition}
\newtheorem{theorem}{Theorem}
\theoremstyle{remark}
\newtheorem{remark}{\sc  Remark\rm}[section]
\newcommand{\assign}{:=}
\newcommand{\mathd}{\mathrm{d}}
\newcommand{\tmabbr}[1]{#1}
\newcommand{\tmem}[1]{{\em #1\/}}
\newcommand{\tmmathbf}[1]{\ensuremath{\boldsymbol{#1}}}
\newcommand{\tmmathmd}[1]{\ensuremath{#1}}
\newcommand{\tmop}[1]{\ensuremath{\operatorname{#1}}}
\newcommand{\tmstrong}[1]{\textbf{#1}}
\newcommand{\vBonecurl}{\mathring{W}^1(\mathrm{curl},\mathbb{R}^3)}
\newcommand{\vWonecurl}{\mathring{H}^1(\mathrm{curl},\mathbb{R}^3)}
\newcommand{\vWonecurlh}{\mathring{H}_{\mathrm{sol}}^1(\ensuremath{\mathrm{curl}},\mathbb{R}^3
  )} \newcommand{\Bonegrad}{\mathring{W}^1(\mathbb{R}^3)}
\newcommand{\Wonegradh}{\mathring{H}^1(\mathbb{R}^3)}
\newcommand{\D}{\mathcal{D}(\mathbb{R}^3)}
\newcommand{\vD}{\mathcal{D}(\mathbb{R}^3;\mathbb{R}^3)}
\newcommand{\Ltwo}{L^2(\mathbb{R}^3)}
\newcommand{\vLtwo}{L^2(\mathbb{R}^3;\mathbb{R}^3)}
\newcommand{\Ltwow}{L^2_{\omega}(\mathbb{R}^3)}
\newcommand{\vLtwow}{L^2_{\omega}(\mathbb{R}^3;\mathbb{R}^3)}
\newcommand{\vLtwoww}{L^2_{\omega^{-1}}(\mathbb{R}^3;\mathbb{R}^3)}
\newcommand{\vDsol}{\mathcal{D}_{\mathrm{sol}}(\mathbb{R}^3;\mathbb{R}^3)}
\newcommand{\Dp}{\mathcal{D}'(\mathbb{R}^3)}
\newcommand{\vDp}{\mathcal{D}'(\mathbb{R}^3;\mathbb{R}^3)}
\newcommand{\hd}{\tmmathbf{h}}
\newcommand{\ha}{\tmmathbf{h}_a}
\newcommand{\bd}{\tmmathbf{b}}
\newcommand{\ud}{\tmmathbf{u}}
\newcommand{\ad}{\tmmathbf{a}}
\newcommand{\m}{\tmmathbf{m}}
\newcommand{\dual}[2]{\tmmathbf{\left\langle \tmmathmd{\mathrm{#1}}, \tmmathmd{\mathrm{#2}} \right\rangle}}
\newcommand{\RR}{\mathbb{R}}
\newcommand{\NN}{\mathbb{N}}
\newcommand{\R}{\mathbb{R}}
\newcommand{\Stwo}{\mathbb{S}}
\newcommand{\lapl}{\Delta}
\newcommand{\divv}{\mathrm{div\,}}
\newcommand{\grad}{\mathrm{{\nabla}}}
\newcommand{\curl}{\ensuremath{\mathrm{curl\,}}}
\newcommand{\st}{\mathbf{\; :\; }}
\newcommand{\eqs}{=}
\definecolor{ao(english)}{rgb}{0.0, 0.5, 0.0}
\DeclareRobustCommand{\subtitle}[1]{\\#1}
\begin{document}

\title[Variational principles of micromagnetics]{Variational
  principles of micromagnetics revisited}
        
\author{Giovanni Di Fratta}
\address{{Giovanni Di Fratta,\,\small{  Institute for Analysis and Scientific Computing, TU Wien\\
Wiedner Hauptstra{\ss}e 8-10 \\
1040 Wien, Austria.}}}

\author{Cyrill B. Muratov}
\address{{Cyrill B. Muratov,\,\small{  New Jersey Institute of Technology \\
Newark NJ 07102, USA.}}}

\author{Filipp N. Rybakov}
\address{{Filipp N. Rybakov,\,\small{  Department of Physics \\KTH-Royal Institute of Technology \\ Stockholm, SE-10691 Sweden}}}

\author{Valeriy V. Slastikov}
\address{{Valeriy V. Slastikov,\,\small{School of Mathematics \\
University of Bristol \\
Bristol BS8 1TW, United Kingdom.}}}

\begin{abstract}
  
  We revisit the basic variational formulation of the minimization
  problem associated with the micromagnetic energy, with an emphasis
  on the treatment of the stray field contribution to the energy,
  which is intrinsically non-local. Under minimal assumptions, we
  establish three distinct variational principles for the stray field
  energy: a minimax principle involving magnetic scalar potential and
  two minimization principles involving magnetic vector potential. We
  then apply our formulations to the dimension reduction problem for
  thin ferromagnetic shells of arbitrary shapes.
  
\vspace{11pt}
\noindent \tmstrong{Keywords.} Micromagnetics, Maxwell's equations,
stray field, minimizers, $\Gamma$-convergence

\noindent \tmstrong{AMS subject classifications.} 35Q61, 49J40, 49S05, 82D40
\end{abstract}

\maketitle
\renewcommand{\subtitle}[1]{}

\section{Introduction}

Ferromagnetism is a striking and subtle phenomenon. Observable on the
macroscopic scale, ferromagnetism has its origins from the two
quintessentially quantum mechanical properties of matter, namely the
electron spin and the Pauli exclusion principle \cite{aharoni}. The
quantum mechanical origin of ferromagnetism accounts for the existence
of a multitude of intriguing spin textures, from macroscopic down to
single nanometer scales
\cite{hubert,Arrott2016,FerPac2017,Donnelly2017}.
%
%
The small size of the magnetization patterns, along with the modest energy
required to manipulate them has produced and is continuing to lead to
far-reaching applications in information technology
\cite{piramanayagam07,bader10,apalkov16,bhatti17}.

There is a well established and extremely successful continuum theory
of micromagnetism, the {\it micromagnetic variational principle}, that
describes the equilibrium and dynamic magnetization configurations
\cite{brown,hubert,landau8,fidler00,oommf,leliaert18}. In this theory,
magnetization is described by a spatially varying vector field
$\mathbf M$, and stable magnetization configurations correspond to
global and local minimizers of the micromagnetic energy -- a
non-convex, nonlocal functional involving multiple length scales.  The
micromagnetic energy associated with the magnetization state of a
ferromagnetic sample occupying three-dimensional bounded domain
$\Omega$ ($\Omega \subset\R^3$) is
\cite{hubert,landau8,bertotti1998hysteresis}
\begin{align}
  \label{Ephys}
  E(\mathbf M) = {A \over M_\mathrm{s}^2} \int_\Omega |\nabla \mathbf M|^2 \,
  d^3 r + K \int_\Omega \Phi \left( \frac{\mathbf M}{M_\mathrm{s}} \right) d^3 r
  - \frac{\mu_0}{2} \int_\Omega \mathbf H_\mathrm{d} \cdot \mathbf M\, d^3 r -
  \mu_0 \int_\Omega \mathbf H_\mathrm{a} \cdot \mathbf M \, d^3 r,
\end{align}
where $\mathbf M = (M_1, M_2, M_3)$ is the magnetization vector that
satisfies $|\mathbf M|=M_\mathrm{s}$ in $\Omega$ and
$\mathbf M = \mathbf 0$ in $\R^3\setminus \Omega$ (i.e., outside the
domain $\Omega$), the positive constants $M_\mathrm{s}$, $A$ and $K$
are the saturation magnetization and exchange and anisotropy
constants, respectively, $\mathbf H_\mathrm{a}$ is the applied
magnetic field, and $\mu_0$ is the permeability of vacuum. Here we use
the standard notation
$|\nabla \mathbf M|^2 = |\nabla M_1|^2 + |\nabla M_2|^2 + |\nabla
M_3|^2$ for the Euclidean norm of gradients of vectorial quantities.
All physical quantities are assumed to be in the SI units.  The
demagnetizing field $\mathbf H_\mathrm{d}$ is determined via the
magnetic induction
$\mathbf B = \mathbf B_\mathrm{a} + \mathbf B_\mathrm{d}$, where
$\mathbf B_\mathrm{a} = \mu_0 \mathbf H_\mathrm{a}$ is the induction
in the absence of the ferromagnet due to permanent external field
sources, and
\begin{align}
  \label{B}
  \mathbf B_\mathrm{d} = \mu_0 (\mathbf H_\mathrm{d} + \mathbf M).
\end{align}
The pair $(\mathbf H_\mathrm{d}, \mathbf B_\mathrm{d})$ solves the
following system obtained from the time-independent Maxwell's
equations:
\begin{align} \label{eq:Maxwell} \divv \mathbf B_\mathrm{d} =0, \qquad
  \curl \mathbf H_\mathrm{d} = \mathbf 0,
\end{align}
where we noted that by definition $\divv \mathbf B_\mathrm{a} = 0$ in
$\mathbb R^3$. In \eqref{Ephys}, the terms in the order of appearance
are the exchange, $E_\mathrm{ex}$, magnetocrystalline anisotropy,
$E_\mathrm{a}$, stray field, $E_\mathrm{s}$, and Zeeman,
$E_\mathrm{Z}$, energies, respectively.

There exist several well-known representations of the stray field
energy employed in the analysis of the micromagnetic energy
\cite{brown0}. Using \eqref{eq:Maxwell}, one can introduce the magnetic scalar
potential $U_\mathrm{d}\,:\, \R^3 \to \R$ associated with the
demagnetizing field, such that
$\mathbf H_\mathrm{d} = -\nabla U_\mathrm{d}$, and $U_\mathrm{d}$
satisfies the following equation in the sense of distributions
\begin{align}
  \Delta U_\mathrm{d} = \divv \mathbf M
\end{align}
and vanishes at infinity. The stray field energy can be rewritten in
terms of $U_\mathrm{d}$ as \cite{brown0} 
\begin{align}
  \label{eq:EU}
  E_\mathrm{s}(\mathbf M) = \frac{\mu_0}{2}  \int_\Omega \mathbf M
  \cdot \nabla 
  U_\mathrm{d}\, d^3 r = \frac{\mu_0}{2} \int_{\R^3} |\nabla
  U_\mathrm{d}|^2 \, d^3 r.  
\end{align}
Using the fundamental solution of the Laplace equation in $\R^3$, one
can also rewrite the stray field energy in the following way
\begin{align}
  E_\mathrm{s} (\mathbf M) = {\mu_0 \over 8 \pi} \int_{\R^3} \int_{\R^3} {\divv
  \mathbf M(\mathbf r) \, \divv \mathbf M(\mathbf r') \over |
  \mathbf r - \mathbf r'|}  \, d^3 r \, d^3 r',  
\end{align}
reflecting its nonlocal and singular nature. Note that since
$\mathbf M$ has a jump at the boundary of domain $\Omega$, its
divergence $\divv \mathbf M$ has a singularity and, therefore must be
understood in a formal sense through its Fourier symbol.


Another way to represent the stray field energy is to employ the
magnetic vector potential $\mathbf A$ satisfying
$\mathbf B = \curl \mathbf A = \curl( \mathbf A_\mathrm{a} + \mathbf
A_\mathrm{d})$, where $ \mathbf A_\mathrm{a}$ and
$\mathbf A_\mathrm{d}$ are the contributions associated with
$\mathbf B_\mathrm{a}$ and $\mathbf B_\mathrm{d}$, respectively.
The magnetic vector potential is unobservable and not uniquely defined
due to gauge invariance. However, this potential is contained in the
momentum operator for a charged particle and therefore plays a crucial
role in the description of superconductivity and
Ehrenberg-Siday-Aharonov-Bohm effect underlying the method of electron
holography~\cite{Lichte_2007}.  In the Coulomb gauge one sets
$\divv \mathbf A_\mathrm{a} = \divv \mathbf A_\mathrm{d} =0$, leading
to the following equation for $\mathbf A_\mathrm{d}$ understood in the
sense of distributions \cite{brown0}:
\begin{align}
  \curl ( \curl \mathbf A_\mathrm{d}) = -\Delta \mathbf A_\mathrm{d} =
  \mu_0 \, \curl \mathbf M, 
\end{align}
where we used the identity
$\grad (\divv \mathbf A) - \curl (\curl \mathbf A) = \Delta \mathbf
A$.  In a similar way as with the use of magnetostatic potential
$U_\mathrm{d}$, we can rewrite the demagnetizing field
$\mathbf H_\mathrm{d} = \mu_0^{-1} \curl \mathbf A_\mathrm{d} -
\mathbf M$ to represent the stray field energy as
\begin{align}
  E_\mathrm{s}(\mathbf M) = \frac12 \int_\Omega  \left( \mu_0 |\mathbf
  M|^2 - \mathbf M \cdot \curl \mathbf A_\mathrm{d}
  \right) d^3 r =  \frac{1}{2\mu_0} \int_{\R^3} \left|
  \curl \mathbf A_\mathrm{d} - \mu_0 \mathbf M \right|^2  \, d^3 r. 
\end{align}
Again, using the fundamental solution of the Laplace equation in
$\R^3$ we obtain another representation of the stray field energy: 
\begin{align}
  \label{Escurlcurl}
  E_\mathrm{s}(\mathbf M) = \frac{1}{2} \mu_0 M_\mathrm{s}^2 |\Omega|
  - {\mu_0 \over 8 \pi} \int_{\R^3}\int_{\R^3} {\curl \mathbf
  M(\mathbf r) \cdot 
  \curl \mathbf M(\mathbf r') \over | \mathbf r - \mathbf r'|}
  \, d^3 r \, d^3 r',
\end{align}
where $|\Omega|$ is the volume of $\Omega$.  Note that since
$\mathbf M$ has a jump at the boundary of domain $\Omega$,
$\curl \mathbf M$ has a singularity and, therefore must again be
understood in a formal sense through its Fourier symbol.

The multi-scale complexity of the micromagnetic energy allows for a
variety of distinct regimes characterized by different relations
between material and geometrical parameters, and makes the
micromagnetic theory very rich and challenging
\cite{hubert,desimone06r}. One of the most powerful analytical
approaches to study the equilibria of the micromagnetic energy is the
investigation of its $\Gamma$-limits in various asymptotic regimes. To
achieve this, one needs to obtain asymptotically matching lower and
upper bounds for the micromagnetic energy. Typically, the construction
of the upper bounds is done using appropriate test functions; the
lower bound constructions are more difficult and require a careful
analysis of the specific problem under consideration. We point out,
however, that in the case of the stray field energy even constructing
the upper bounds might present a significant challenge due to the
non-local and singular behavior of the demagnetizing field
$\mathbf H_\mathrm{d}$.

In this paper, we revisit the variational formulation associated with
the micromagnetic energy, emphasizing the treatment of the stray field
energy to obtain efficient upper and lower bounds. To this aim, we
formulate three distinct variational principles for local minimizers
of the micromagnetic energy. The first variational principle can be
stated as a minimax problem for the magnetization $\mathbf M$ and the
scalar potential $U$. Specifically, for $\mathbf M$ fixed the stray
field energy may be expressed as
\begin{align}
  \label{eq:Es1}
  E_\mathrm{s}(\mathbf M) = \max_{U \in \mathring{H}^1(\mathbb R^3)}
  \, \mu_0  \int_{\mathbb R^3} \left( \mathbf M \cdot \nabla U -
  \frac12 |\nabla U|^2 \right) d^3 r 
\end{align}
and, therefore, yields convenient lower bounds on the stray field
energy via the use of test functions for $U$ (recall that
$\mathring{H}^1(\mathbb R^3)$ denotes the space of functions whose
first derivatives are square integrable; see section \ref{s:setup} for
the precise definitions of the function spaces).

The second variational principle is a joint minimization problem for
the magnetization $\mathbf M$ and the vector potential $\mathbf A$
subject to the Coulomb gauge ($\divv \mathbf A = 0$), with the stray
field energy expressed as
\begin{align}
  \label{eq:Es2}
  E_\mathrm{s}(\mathbf M) = \min_{\stackrel{\mathbf A \in 
  \mathring{H}^1(\mathbb R^3; \mathbb R^3)}{\divv \mathbf A = 0}} \, 
  \frac{1}{2\mu_0} \int_{\R^3} \left| 
  \curl \mathbf A - \mu_0 \mathbf M \right|^2  \, d^3 r
\end{align}
and is useful in constructing upper bounds for the stray field energy
via suitable test functions for $\mathbf A$.

Finally, we introduce the third variational principle closely linked
to the second one that amounts to a joint minimization for the
magnetization $\mathbf M$ and the vector potential $\mathbf A$ in the
absence of the constraint on $\divv \mathbf A$. It allows to express
the stray field energy in the form
\begin{align}
  \label{eq:Es3}
  E_\mathrm{s}(\mathbf M) = \frac{1}{2} \mu_0 M_\mathrm{s}^2 V +
  \min_{\mathbf A \in 
  \mathring{H}^1(\mathbb R^3; \mathbb R^3)}  \int_{\mathbb R^3} 
  \left(  \frac{1}{2 \mu_0} |\nabla \mathbf A|^2 - \mathbf M \cdot
  \curl \mathbf A \right) d^3 r.
\end{align}
This formula gives a novel representation of the magnetostatic energy,
which is particularly convenient both for obtaining localized upper
bounds for the micromagnetic energy and the numerical implementation
of the stray field.


The variational principle in \eqref{eq:Es1} leading to \eqref{eq:EU}
is well-known. In the context of micromagnetics, where one needs to
minimize the energy in \eqref{Ephys} with respect to $\mathbf M$ with
$\mathbf H_\mathrm{d}$ determined by the unique solution of
\eqref{eq:Maxwell}, it results in a minimax problem in terms of the
pair $(\mathbf M, U)$. As such, this minimax principle has not been
precisely formulated in the literature, although it has long existed
in the micromagnetics folklore (see, e.g., \cite{
  brown,brown0,james90}). Here we establish the validity of this
variational principle under minimal assumptions that arise naturally
in the context of micromagnetics.

Similarly, the minimization principles for the micromagnetic energy,
in which the stray field energy is expressed through \eqref{eq:Es2} or
\eqref{eq:Es3} appeared in some form in the engineering literature in
the context of finite element discretization of the magnetostatic
problems for ferromagnets (see, e.g.,
\cite{silvester70,demerdash80,coulomb81}) and is an extension of the
well-known variational principles for Maxwell's equations
\cite{monk,kuczmann}. Specifically, those minimization principles rely
on local constitutive relationships between the magnetic induction and
the magnetic field, which in the context of micromagnetics may be
obtained by first minimizing the micromagnetic energy written in terms
of the pair $(\mathbf M, \mathbf A)$ with respect to $\mathbf M$,
provided the exchange energy is neglected \cite{james90}. However, in
the full micromagnetics formulation the exchange energy plays a
crucial role, and, therefore, the variational formulation must include
a joint minimization of $E$ in $(\mathbf M, \mathbf A)$. Note that
while in the case of \eqref{eq:Es2} the minimization in $\mathbf A$
requires an additional constraint in the form of the Coulomb gauge,
the minimization in \eqref{eq:Es3} is unconstrained and automatically
enforces the Coulomb gauge for the minimizers. In fact, if one were to
minimize the expression in \eqref{eq:Es3} within the class in
\eqref{eq:Es2}, one would simply recover the problem in
\eqref{eq:Es2}, since for $\divv \mathbf A = 0$ the two energies
coincide, as can be easily seen via an integration by parts
\cite{du99}. On the other hand, the absence of the divergence-free
constraint, first noted in \cite{coulomb81}, makes the formulation in
\eqref{eq:Es3} clearly more attractive than that in \eqref{eq:Es2} and
opens up a way for an efficient numerical treatment of minimizers of
the micromagnetic energy.  In this paper, we put the above variational
principles on rigorous footing under natural assumptions.

Finally, we illustrate the usefulness of our results for analytical
studies of micromagnetics by applying the obtained variational
principles to the problem of finding the $\Gamma$-limit of the
micromagnetic energy in curved thin ferromagnetic shells. These
problems are interesting due to intrinsic symmetry-breaking mechanisms
coming from the non-zero curvature of the shell generating surfaces
(see \cite{di2019sharp,melcher2019curvature}; see also the recent
review \cite{streubel16}). Some results on this problem have been
previously obtained under technical assumptions on the geometry of the
domain occupied by the ferromagnet, see
\cite{carbou01,difratta16}. Here we show that using our approach these
restrictions can be easily removed, resulting in a leading-order
two-dimensional local energy functional in the spirit of Gioia and
James \cite{gioia97} formulated on two-dimensional surfaces, in which
the stray field energy reduces to the effective shape anisotropy term.

The paper is organized as follows. In section 2 we provide the
mathematical setup of the problem defining appropriate functional
spaces and proving some auxiliary results. In section 3 we prove
Theorem~\ref{mainthmscvec}, providing various characterizations of
the stray field energy. Section 4 is devoted to the proof of
Theorem~\ref{th:thinfilm}, characterizing the $\Gamma$-limit of the
micromagnetic energy of thin shells.

\section{Mathematical setup}
\label{s:setup}

In this section, we introduce the definitions and some useful facts
about the basic function spaces that will be needed in our
analysis. We would like to point out that the vectorial nature of the
problem associated with the demagnetizing field presents some
technical issues in the treatment of stationary Maxwell's equations
under minimal regularity assumptions on the magnetization. Although
some of the problems we are interested in can be investigated in a
potential-theoretic framework (see, e.g., \cite{dautray4,praetorius04,
  friedman1980mathematical, friedman1981mathematical}), here we rely
on their distributional formulations. Another technical issue has to
do with the fact that the problem is considered in the whole
space. For the sake of full generality, we consider the most general
distributional solutions of \eqref{B} and \eqref{eq:Maxwell} and show
that the resulting solutions do indeed belong to the natural energy
spaces, which is not obvious {\em a priori}.

We denote by $\Dp$ the space of distributions on $\RR^3$. Following
{\cite[p.~230]{dautray3}} and
{\cite[pp.~117--118]{dautray4}}, we define the homogeneous
Sobolev space
\begin{equation}
  \Bonegrad \assign \{ u \in \Dp \st \grad u \in \vLtwo \} .
  \label{eq:HSS}
\end{equation}
It is straightforward to show that the quotient space
\begin{equation}
 \Wonegradh \assign \Bonegrad / \RR
\end{equation}
is a Hilbert space for the $L^2$ gradient norm
$u \in\Wonegradh \mapsto \left\| \grad u \right\|_{L^2(\RR^3)}$, and that
$\Wonegradh$ is isometrically isomorphic to the {{weighted}} Sobolev
space $\left\{ u \in \Ltwow \st \grad u \in \vLtwo \right\}$, with
\begin{equation}
  \Ltwow \assign \left\{ u \in L^1_{\tmop{loc}}( \RR ) \st \omega
  u \in \Ltwo \right\}, \quad \omega (x) \assign \frac{1}{\sqrt{1 + | x |^2}}
  . \label{eq:L2w}
\end{equation}
In particular, up to an additive constant, every element of
$\Bonegrad$ is in $\Ltwow \subset L^1_{\tmop{loc}}(\R^3)$. For further
reference, we also define
$L^2_{\omega^{-1}}(\R^3) \assign \left\{ u \in L^1_{\tmop{loc}}( \RR )
  \st \omega^{-1} u \in \Ltwo \right\}$. The symbols
$L^2_{\omega}(\R^3; \R^3)$ and $L^2_{\omega^{-1}}(\R^3; \R^3)$ denote
the vector-valued analogs of the above spaces.


We denote by $\vDp$ the space of vector-valued distributions on
$\RR^3$. Also we denote by $\mathring{W}^1( \RR^3, \RR^3 )$ and
$\mathring{H}^1( \RR^3, \RR^3 ) \assign \mathring{W}^1( \RR^3, \RR^3 )
/ \RR^3$, the vector-valued counterparts of $\Bonegrad$ and
$\mathring{H}^1( \RR^3 )$, respectively, for which the same
considerations hold. Observe that
\begin{align}
  \label{adivcurl2}
  \| \nabla \tmmathbf a \|_{L^2(\RR^3)}^2 = \| \divv \tmmathbf a
  \|_{L^2(\RR^3)}^2 + \| \curl \tmmathbf a \|_{L^2(\RR^3)}^2
  \qquad \forall \tmmathbf a \in \mathring{H}^1( \RR^3; \RR^3 ),
\end{align}
which may be seen from the fact that for every $\tmmathbf a \in \vD$
we have
\begin{align}
  \| \nabla \tmmathbf a \|_{L^2(\RR^3)}^2=-\int_{\RR^3} 
  {\tmmathbf a} \cdot \lapl
  {\tmmathbf a}= \int_{\RR^3} {\tmmathbf a} \cdot \curl \curl
  {\tmmathbf a}-\int_{\RR^3} {\tmmathbf a} \cdot \grad \divv
  {\tmmathbf a}, 
\end{align}
and then arguing by density.

   In the spirit of {\eqref{eq:HSS}}, we also define the
  homogeneous Sobolev space
\begin{equation}
  \vBonecurl \assign \left\{ \tmmathbf{b} \in \vDp \st \curl \tmmathbf{b} \in
  \vLtwo \right\} . \label{HSScurl}
\end{equation}
Note that, $\vBonecurl$ is a subspace of $\vDp$, and that the functional
\begin{equation}
  \left| \, \cdot \, \right|_{\curl} : \tmmathbf{b} \in \vBonecurl \mapsto
  \int_{\RR^3} \left| \curl \tmmathbf{b} \right|^2  \label{eq:snBLcurl}
\end{equation}
is a seminorm on $\vBonecurl$. The kernel of $\left| \, \cdot \,
\right|_{\curl}$ consists of all curl-free distributions. Therefore, by
Poincar{\'e}-de~Rham lemma {\cite[{\tmabbr{p. }}355]{schwartz}},
\begin{eqnarray}
  \ker \left| \, \cdot \, \right|_{\curl} & \eqs & \nabla \Dp \; \equiv \;
  \left\{ \tmmathbf{b} \in \vDp \st \tmmathbf{b}= \grad v \text{ for some } v
  \in \Dp \right\} . 
\end{eqnarray}
We identify distributions which differ by a gradient field. The resulting
quotient space
\begin{equation}
  \vWonecurl \assign \vBonecurl / \nabla \Dp
\end{equation}
is a Hilbert space. Indeed, the following result holds.

\begin{proposition}
  The pair $( \vWonecurl, \left| \, \cdot \, \right|_{\curl}
  )$ forms a complete inner product space.
\end{proposition}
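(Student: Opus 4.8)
The plan is to show that $\left|\,\cdot\,\right|_{\curl}$ is a genuine norm on the quotient $\vWonecurl$, that the induced norm comes from an inner product (this is automatic once we observe that it satisfies the parallelogram law, or directly via polarization of the bilinear form $(\mathbf b,\mathbf c)\mapsto\int_{\R^3}\curl\mathbf b\cdot\curl\mathbf c$), and that the resulting normed space is complete. The first two points are essentially bookkeeping: by construction we have quotiented out exactly $\ker\left|\,\cdot\,\right|_{\curl}=\nabla\mathcal D'(\R^3)$, so $\left|\,\cdot\,\right|_{\curl}$ descends to a norm on $\vWonecurl$, and since it is the square root of a nonnegative symmetric bilinear form on the vector space, polarization produces the inner product. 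So the real content is completeness.

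For completeness I would take a Cauchy sequence $\{[\mathbf b_n]\}$ in $\vWonecurl$. Then $\curl\mathbf b_n$ is Cauchy in $\vLtwo$, hence converges to some $\mathbf g\in\vLtwo$. Since $\curl(\curl\mathbf b_n)\to\curl\mathbf g$ and $\divv(\curl\mathbf b_n)=0$ for all $n$, we get $\divv\mathbf g=0$ in $\vDp$. The key step is then to produce a distribution $\mathbf b\in\vBonecurl$ with $\curl\mathbf b=\mathbf g$: this is exactly the surjectivity direction of the Poincar\'e--de~Rham lemma, using $\divv\mathbf g=0$, which guarantees the existence of a vector potential $\mathbf b\in\vDp$ with $\curl\mathbf b=\mathbf g$ (solvability of $\curl\mathbf b=\mathbf g$ in the space of distributions for a divergence-free $\mathbf g$). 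Once such $\mathbf b$ exists, $\mathbf b\in\vBonecurl$ because $\curl\mathbf b=\mathbf g\in\vLtwo$, and then $\left|[\mathbf b_n]-[\mathbf b]\right|_{\curl}=\|\curl\mathbf b_n-\mathbf g\|_{L^2(\R^3)}\to 0$, so $[\mathbf b_n]\to[\mathbf b]$ in $\vWonecurl$. This shows $\vWonecurl$ is complete.

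The main obstacle I anticipate is justifying that $\left|\,\cdot\,\right|_{\curl}$ is actually a \emph{norm} on the quotient, i.e.\ that its kernel on $\vBonecurl$ is precisely $\nabla\mathcal D'(\R^3)$ and not something larger or awkwardly described — but this is already recorded in the excerpt as a consequence of the Poincar\'e--de~Rham lemma, so I can invoke it directly. The only other subtlety is that one should check $\nabla\mathcal D'(\R^3)$ is a closed subspace of $\vBonecurl$ in an appropriate sense so that the quotient is Hausdorff and the quotient seminorm is a norm; but since we are quotienting out exactly the kernel of the seminorm, this is automatic. I would therefore present the argument in the order: (i) $\left|\,\cdot\,\right|_{\curl}$ descends to a norm on $\vWonecurl$ since we quotiented by its kernel; (ii) the norm is induced by the inner product obtained by polarizing $\int_{\R^3}\curl\mathbf b\cdot\curl\mathbf c$; (iii) completeness via the Cauchy-sequence argument above, with the vector-potential existence (Poincar\'e--de~Rham) as the crucial analytic input.
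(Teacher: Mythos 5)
Your proposal is correct and follows essentially the same route as the paper: for completeness, both take a Cauchy sequence, note that the curls converge in $L^2$ to some divergence-free $\mathbf g$, and invoke the Poincar\'e--de~Rham lemma to produce a vector potential $\mathbf b$ with $\curl\mathbf b=\mathbf g$. The paper simply takes your steps (i) and (ii) for granted and gives only the completeness argument, spelling out the verification that $\divv\mathbf g=0$ by pairing against test functions in $\D$.
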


\begin{proof}
  Let $(\tmmathbf{b}_n)_{n \in \NN} \in \vWonecurl$ be a Cauchy
  sequence in $\vWonecurl$. This means that $( \curl \: \tmmathbf{b}_n
  )_{n \in \NN}$ is a Cauchy sequence in $\vLtwo$. Therefore, there
  exists $\tmmathbf{j} \in \vLtwo$ such that $\curl \: \tmmathbf{b}_n
  \rightarrow \tmmathbf{j}$ in $\vLtwo$. To prove completeness, it remains to
  show that $\tmmathbf{j}$ is in $\curl( \vDp )$. This is a
  consequence of Poincar{\'e}-de~Rham lemma {\cite[{\tmabbr{p.
  }}355]{schwartz}}. Indeed, as $\tmmathbf{j} \in \vLtwo$ we have,
  for every $\varphi \in \D$,
  \[ \dual{\divv \tmmathbf{j}}{\varphi} \eqs \int_{\RR^3} \tmmathbf{j} \cdot
     \grad \varphi \eqs \lim_{n \rightarrow \infty} \int_{\RR^3} \curl \:
     \tmmathbf{b}_n \cdot \grad \varphi \eqs 0, \]
  and therefore $\divv \tmmathbf{j}=\tmmathbf{0}$. Hence, $\curl
  \tmmathbf{b}=\tmmathbf{j}$ for some $\tmmathbf{b} \in \vDp$.
\end{proof}

We shall need the closed subspace of $\vWonecurl$ generated by
the limits of all divergence-free (solenoidal) and compactly
supported vector fields. To this end, we set
\begin{equation}
  \vDsol \assign \left\{ \tmmathbf{a} \in \vD \st \divv \tmmathbf{a} \equiv 0
  \right\} .
\end{equation}
\begin{remark} \label{rmk:Dsolinfinitedim}
  Since the set of harmonic functions in $\vD$ reduces to the null function,
  it is natural to concern about the cardinality of $\vDsol$. In that regard,
  we observe that the vector space $\vDsol$ is infinite-dimensional. Indeed,
  let $\rho : \RR \to \RR^+$ be in $\mathcal{D}( \RR )$ and
  suppose $\rho \equiv 1$ in a neighborhood of $0$. Also, let
  $\tmmathbf{\xi} \in C^{\infty}( \RR^3; \RR^3 )$ and consider
  the vector field
  \begin{equation}
  \tmmathbf{a} (x) \assign \rho (| x |)
    (\tmmathbf{\xi} (x) \times x) \qquad    x \in \RR^3.
  \end{equation}
  Clearly, $\tmmathbf{a} \in \mathcal{D}( \RR^3; \RR^3 )$ and,
  moreover, $\divv \tmmathbf{a} (x) \eqs \rho (| x |) \curl \tmmathbf{\xi}
  (x) \cdot x +( \grad [\rho (| x |)] \times \tmmathbf{\xi} (x)
  ) \cdot x$. Since $\grad [\rho (| x |)] = 0$ in a sufficiently small
  neighborhood of the origin, and outside that neighborhood one has $\grad
  [\rho (| x |)] = \rho' (| x |) x / | x |$, we get that $( \grad [\rho
  (| x |)] \times \tmmathbf{\xi} (x) ) \cdot x = 0$ everywhere in
  $\RR^3$. It then follows that $\divv \tmmathbf{a} (x) \eqs \rho (| x |) \curl
  \tmmathbf{\xi} (x) \cdot x$. As a consequence, for any curl-free vector
  field $\tmmathbf{\xi} \in C^{\infty}( \RR^3; \RR^3 )$, and
  any bump function $\rho$ we get $\divv \tmmathbf{a} \equiv 0$. This proves
  that $\vDsol$ is infinite-dimensional due to the arbitrary choices of
  $\rho$ and $\tmmathbf{\xi}$.
\end{remark}

We denote by {$\vWonecurlh$} the closure of $\vDsol$ in
$\vWonecurl$. We observe that, with
$\omega (x) \assign (1 + | x |^2)^{- 1 / 2}$, the following inequality
holds:
\begin{equation}
  \int_{\RR^3} | \tmmathbf{a} (x) |^2 \omega^2 (x) \mathd x \leqslant \; 4
  \int_{\RR^3} \left|  \curl \tmmathbf{a} (x) \right|^2 \mathd x \quad \forall
  \tmmathbf{a} \in \vDsol . \label{eq:PIforcurl}
\end{equation}
Indeed, \eqref{adivcurl2} and Hardy's inequality
{\cite[p.~296]{evanspde}} imply
$\| \omega \tmmathbf{a} \|_{L^2(\RR^3)}^2 \leqslant \; 4 \left\| \grad
  \tmmathbf{a} \right\|_{L^2(\RR^3)}^2$.

Our first observation is a regularity result on the structure of
{$\vWonecurlh$}. In what follows, we use the notation $[\tmmathbf{a}] \in
\vWonecurlh$ to denote the equivalence class which has $\tmmathbf{a} \in
\vBonecurl$ as representative; in other words, $[\tmmathbf{a}] \assign \left\{
\tmmathbf{a}+ \grad v \right\}_{v \in \Dp}$.

\begin{theorem}
  \label{thm:regcurl} The following
  statements hold:
  \begin{enumerate}
  \item[$\mathrm{(}i \mathrm{)}$] Let
    $[\tmmathbf{a}] \in \vWonecurlh$. There exists a unique
    representative
    $\tmmathbf{a}^{\star} \in [\tmmathbf{a}] \cap \mathring{H}^1(
    \RR^3; \RR^3 )$ which is divergence-free. In particular,
    $\tmmathbf{a}^{\star}$ is the unique divergence-free
    representative of $[\tmmathbf a]$ that belongs to $\vLtwow$.
    \smallskip
  \item[$\mathrm{(}ii \mathrm{)}$] If $[\tmmathbf{a}]\in \vWonecurlh$
    has a representative $\tmmathbf{\jmath} \in \vLtwo$, then also
    $\tmmathbf{a}^{\star}$ belongs to $\vLtwo$. Precisely,
    $\tmmathbf{a}^{\star}$ can be decomposed in the form
    \begin{equation}
      \tmmathbf{a}^{\star} =\tmmathbf{\jmath}+ \grad v_{\tmmathbf{\jmath}},
    \end{equation}
    with $v_{\tmmathbf{\jmath}}$ the unique solution, in $\mathring{H}^1(
    \RR^3 )$, of the Poisson equation $- \lapl v_{\tmmathbf{\jmath}} =
    \divv \tmmathbf{\jmath}$.
\smallskip    
\item[$\mathrm{(}iii \mathrm{)}$] If
  $\tmmathbf{a}_\circ\in \mathring{H}^1( \RR^3; \RR^3 )$ and
  $\divv \tmmathbf{a}_\circ=0$ then
  $[\tmmathbf{a}_\circ]\in \vWonecurlh$ and
  $\tmmathbf{a}_\circ=\tmmathbf{a}^{\star}$.
  \end{enumerate}
\end{theorem}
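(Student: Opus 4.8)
The plan is to establish the three assertions in turn, relying throughout on the bound \eqref{eq:PIforcurl}, the identity \eqref{adivcurl2}, and one auxiliary rigidity fact: \emph{if $w\in\Dp$ satisfies $\lapl w=0$ and $\grad w\in\vLtwow$, then $\grad w\equiv 0$}. I would prove this fact by observing that, by Weyl's lemma, each component $g$ of $\grad w$ is a smooth harmonic function; the mean value property on $B_R(x_0)$ together with Cauchy--Schwarz then gives $|g(x_0)|\le\big(|B_R|^{-1}(1+(|x_0|+R)^2)\big)^{1/2}\,\|\omega\,g\|_{\Ltwo}$, whose right-hand side tends to $0$ as $R\to\infty$ because $|B_R|^{-1}(1+R^2)\to0$ in dimension $3$, so $g\equiv 0$. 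For Part~(i), I would take $(\mathbf a_n)\subset\vDsol$ with $\curl\mathbf a_n\to\curl\mathbf a$ in $\vLtwo$; since $\mathbf a_n-\mathbf a_m\in\vDsol$, \eqref{adivcurl2} (with vanishing divergence) gives $\|\grad(\mathbf a_n-\mathbf a_m)\|_{\Ltwo}=\|\curl(\mathbf a_n-\mathbf a_m)\|_{\Ltwo}$ and \eqref{eq:PIforcurl} gives $\|\omega(\mathbf a_n-\mathbf a_m)\|_{\Ltwo}\le 2\|\curl(\mathbf a_n-\mathbf a_m)\|_{\Ltwo}$, so $\mathbf a_n\to\mathbf a^{\star}$ in $\vLtwow$ with $\grad\mathbf a_n\to\grad\mathbf a^{\star}$ in $\Ltwo$; hence $\mathbf a^{\star}\in\mathring H^1(\RR^3;\RR^3)$ by the weighted-Sobolev identification of Section~\ref{s:setup}, and passing to distributional limits yields $\divv\mathbf a^{\star}=0$ and $\curl\mathbf a^{\star}=\curl\mathbf a$, the latter giving $\mathbf a^{\star}\in[\mathbf a]$ through the Poincar\'e--de~Rham lemma. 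Uniqueness is then immediate: two divergence-free representatives of $[\mathbf a]$ in $\vLtwow$ differ by $\grad w$ with $\lapl w=0$ and $\grad w\in\vLtwow$, hence coincide by the rigidity fact; and since every representative of $[\mathbf a]$ in $\mathring H^1(\RR^3;\RR^3)$ lies, after fixing the additive constant, in $\vLtwow$, $\mathbf a^{\star}$ is also the unique one there.

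For Part~(ii), I would write $\mathbf a^{\star}-\tmmathbf{\jmath}=\grad w$ with $w\in\Dp$ (both are representatives of $[\mathbf a]$), so $\lapl w=-\divv\tmmathbf{\jmath}$. Since $\tmmathbf{\jmath}\in\vLtwo$, the functional $\varphi\mapsto\int_{\RR^3}\tmmathbf{\jmath}\cdot\grad\varphi$ is bounded on $\mathring H^1(\RR^3)$, so Lax--Milgram yields a unique $v_{\tmmathbf{\jmath}}\in\mathring H^1(\RR^3)$ with $-\lapl v_{\tmmathbf{\jmath}}=\divv\tmmathbf{\jmath}$. Then $w-v_{\tmmathbf{\jmath}}$ is harmonic and $\grad(w-v_{\tmmathbf{\jmath}})=(\mathbf a^{\star}-\tmmathbf{\jmath})-\grad v_{\tmmathbf{\jmath}}\in\vLtwow$ (a sum of three $\vLtwow$-fields, using $\tmmathbf{\jmath},\grad v_{\tmmathbf{\jmath}}\in\vLtwo\subset\vLtwow$ and $\mathbf a^{\star}\in\vLtwow$), so the rigidity fact forces $\grad(w-v_{\tmmathbf{\jmath}})\equiv 0$, i.e. $\mathbf a^{\star}=\tmmathbf{\jmath}+\grad v_{\tmmathbf{\jmath}}\in\vLtwo$.

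For Part~(iii), it suffices to produce $(\mathbf a_n)\subset\vDsol$ with $\curl\mathbf a_n\to\curl\mathbf a_\circ$ in $\vLtwo$, for then $[\mathbf a_\circ]\in\vWonecurlh$ and, $\mathbf a_\circ$ being a divergence-free $\vLtwow$-representative of its class, Part~(i) forces $\mathbf a_\circ=\mathbf a^{\star}$. I would first mollify, $\mathbf a_\varepsilon:=\mathbf a_\circ\ast\rho_\varepsilon$, which is smooth and divergence-free with $\curl\mathbf a_\varepsilon\to\curl\mathbf a_\circ$ in $\vLtwo$ and $\mathbf a_\varepsilon\in\vLtwow$ (since $\omega(x)\le\sqrt3\,\omega(x-y)$ for $|y|\le1$ gives $\omega|\mathbf a_\varepsilon|\le\sqrt3\,(\omega|\mathbf a_\circ|)\ast\rho_\varepsilon$). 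Next I would cut off with $\chi_R(x):=\chi(x/R)$, $\chi\in\D$, $\chi\equiv1$ on $B_1$, $\operatorname{supp}\chi\subset B_2$; then $\chi_R\mathbf a_\varepsilon$ has compact support with
\[
\divv(\chi_R\mathbf a_\varepsilon)=\grad\chi_R\cdot\mathbf a_\varepsilon=:f_R,\qquad
\curl(\chi_R\mathbf a_\varepsilon)=\chi_R\,\curl\mathbf a_\varepsilon+\grad\chi_R\times\mathbf a_\varepsilon,
\]
and, using $|\grad\chi_R|\le C/R$ and $\omega^{-2}\le1+4R^2$ on $\operatorname{supp}\grad\chi_R\subset B_{2R}\setminus B_R$, one gets $\|f_R\|_{\Ltwo}+\|\grad\chi_R\times\mathbf a_\varepsilon\|_{\Ltwo}\le C\big(\int_{|x|>R}\omega^2|\mathbf a_\varepsilon|^2\big)^{1/2}\to0$ as $R\to\infty$, while $\chi_R\,\curl\mathbf a_\varepsilon\to\curl\mathbf a_\varepsilon$ in $\vLtwo$. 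Since $f_R\in\D$ has zero mean, a standard compactly supported right inverse of the divergence (the Bogovskii operator) on $B_{2R}$ gives $\mathbf w_R\in\vD$ supported in $B_{2R}$ with $\divv\mathbf w_R=f_R$ and, the relevant estimate being invariant under the rescaling $x\mapsto x/R$, $\|\grad\mathbf w_R\|_{\Ltwo}\le C\|f_R\|_{\Ltwo}\to0$. Then $\chi_R\mathbf a_\varepsilon-\mathbf w_R\in\vDsol$ and its curl converges to $\curl\mathbf a_\varepsilon$ in $\vLtwo$; a diagonal choice of $(\varepsilon,R)$ produces the desired sequence.

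The step I expect to be the main obstacle is Part~(iii): truncating with $\chi_R$ necessarily spoils the Coulomb gauge, and the corrector $\mathbf w_R$ must have Dirichlet energy controlled \emph{uniformly} in the size of its unbounded support --- this is exactly where the scale invariance of the Bogovskii estimate is essential, and matching $\curl\mathbf a_\circ$ rather than $\mathbf a_\circ$ itself (legitimate because the equivalence class is determined by the curl) is what makes this arrangement work. The only other non-formal point is the weighted rigidity fact, which underlies both uniqueness in (i)--(ii) and the identification $\mathbf a_\circ=\mathbf a^{\star}$, and it is dispatched by the mean value property as above.
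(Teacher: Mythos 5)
Your proof is correct, but it departs from the paper's argument in two substantive places.

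In Part~(i), after extracting $\tmmathbf a^\star\in\vLtwow$, the paper establishes $\tmmathbf a^\star\in\mathring H^1(\RR^3;\RR^3)$ indirectly: it observes that $\tmmathbf a^\star$ solves $-\lapl\tmmathbf a=\curl\tmmathbf b^\star$ with $\tmmathbf b^\star:=\curl\tmmathbf a^\star\in\vLtwo$, invokes Riesz representation to produce an $\mathring H^1$ solution, and then matches the two via harmonicity of the difference. You instead exploit the identity \eqref{adivcurl2} \emph{already at the level of the approximating sequence}: since $\tmmathbf a_n-\tmmathbf a_m\in\vDsol$, the full gradient is Cauchy in $\vLtwo$, so $\tmmathbf a^\star\in\mathring H^1(\RR^3;\RR^3)$ comes for free by passing to the limit. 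This is shorter and avoids the auxiliary Poisson problem entirely. For the rigidity statement underlying uniqueness (a harmonic field with gradient in $\vLtwow$ is constant), you give a self-contained mean-value-property argument, while the paper passes through tempered distributions and Liouville's theorem to conclude the difference is a polynomial which must vanish; these are equivalent inputs dressed differently.

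In Part~(iii) the routes diverge genuinely. The paper is abstract: it poses the variational equation
\begin{equation}
\int_{\RR^3}\curl\tmmathbf a\cdot\curl\tmmathbf\varphi^\star=\int_{\RR^3}\curl\tmmathbf a_\circ\cdot\curl\tmmathbf\varphi^\star\quad\forall\tmmathbf\varphi^\star\in\vWonecurlh,
\end{equation}
solves it in $\vWonecurlh$ by Lax--Milgram, tests against $\curl\tmmathbf\varphi$ to get a PDE identity, and shows $\curl(\tmmathbf a^\star-\tmmathbf a_\circ)$ is a harmonic $\vLtwo$ field, hence zero, so $\tmmathbf a_\circ\in[\tmmathbf a^\star]$ and the uniqueness from (i) finishes. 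You instead construct an explicit approximating sequence in $\vDsol$: mollification preserves the divergence-free condition, a cutoff $\chi_R$ localizes at the price of a small divergence $f_R=\grad\chi_R\cdot\tmmathbf a_\varepsilon$, and the Bogovskii operator corrects it with a compactly supported field whose Dirichlet energy is controlled by $\|f_R\|_{\Ltwo}$, uniformly in $R$ by scale invariance. The key observation you highlight --- that one only needs to approximate $\curl\tmmathbf a_\circ$ rather than $\tmmathbf a_\circ$ itself --- is exactly what makes the diagonalization legitimate, and your weight estimate $\|f_R\|_{\Ltwo}\lesssim\big(\int_{|x|>R}\omega^2|\tmmathbf a_\varepsilon|^2\big)^{1/2}\to 0$ closes the loop. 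Your argument is more constructive and requires the Bogovskii machinery; the paper's is shorter but less transparent about what membership in $\vWonecurlh$ actually looks like. Both are valid proofs.

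Two minor remarks. First, in Part~(i) you should say a word about why the $\vLtwow$-limit and the gradient $\vLtwo$-limit are compatible (both identify in $\vDp$, so the $\vLtwo$-limit of the gradients is the distributional gradient of the $\vLtwow$-limit); you use this implicitly. Second, in Part~(iii) you should confirm that $f_R$ has zero mean before invoking Bogovskii --- it does, since $\int f_R=\int\divv(\chi_R\tmmathbf a_\varepsilon)=0$ by compact support --- and that the constant in the Bogovskii estimate on $B_{2R}$ is $R$-independent, which holds because the relevant estimate is invariant under dilations of the ball. Neither of these is a gap, merely a detail worth making explicit.
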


\begin{proof}
  $\mathrm{(}i \mathrm{)}$ Let $[\tmmathbf{a}] \in \vWonecurlh$, and
  $\tmmathbf{a}_n \in \vDsol$ be such that
  $\tmmathbf{a}_n \rightarrow \tmmathbf{a}$ in $\vWonecurl$. Clearly,
  $[\tmmathbf{a}] \in \vWonecurlh$ and
  $( \curl \tmmathbf{a}_n )_{n \in \NN}$ is Cauchy in
  $\vWonecurlh$. Since $\vLtwow$ is a complete space, by
  {\eqref{eq:PIforcurl}}, there exists
  $\tmmathbf{a}^{\star} \in \vLtwow$ such that
  $\tmmathbf{a}_n \rightarrow \tmmathbf{a}^{\star}$ in $\vLtwow$.
  Therefore,
  \begin{alignat}{2}
    0 \eqs \divv \tmmathbf{a}_n & \rightarrow \divv
    \tmmathbf{a}^{\star}
    \eqs 0  & &\quad\text{ in } \vDp, \\
    \curl \tmmathbf{a}_n & \rightarrow \curl \tmmathbf{a}^{\star} &&
    \quad\text{
      in } \vDp, \\
    \curl \tmmathbf{a}_n \; &\rightarrow \curl \tmmathbf{a} &&
    \quad\text{ in } \vDp .
  \end{alignat}
  This means that $\curl  (\tmmathbf{a}^{\star} -\tmmathbf{a}) = 0$ and,
  therefore, that in any equivalence class $[\tmmathbf{a}] \in \vWonecurlh$
  there exists a divergence-free vector field $\tmmathbf{a}^{\star} \in
  \vLtwow$. Note that $\tmmathbf{a}^{\star} \in \vLtwow$ is then necessarily
  unique. Indeed, if $\tmmathbf{\jmath}^{\star} \in \vLtwow$ is another
  divergence-free representative, then $\curl \tmmathbf{a}^{\star} = \curl
  \tmmathbf{\jmath}^{\star}$ and $\divv \tmmathbf{a}^{\star} = \divv
  \tmmathbf{\jmath}^{\star} = 0$. This implies that 
  \begin{align}
    \mathbf 0=\nabla(\divv(\tmmathbf{a}^{\star} -
    \tmmathbf{j}^{\star})) -
    \curl(\curl(\tmmathbf{a}^{\star}-\tmmathbf{j}^{\star}))= \lapl 
    (\tmmathbf{a}^{\star} -\tmmathbf{\jmath}^{\star}) \hbox{ in } \vDp,
  \end{align}
  and in view of
  $\tmmathbf{a}^{\star}- \tmmathbf{j}^{\star} \in \vLtwow$ we have
  $\lapl (\tmmathbf{a}^{\star} -\tmmathbf{\jmath}^{\star}) = \mathbf
  0$ in the sense of tempered distributions
  $\tmmathbf{\mathcal{S}}'( \RR^3 )$. Therefore, by Liouville's
  theorem {\cite[p.~41]{eskin}}, it follows that
  $\tmmathbf{a}^{\star} -\tmmathbf{\jmath}^{\star}$ is a polynomial
  vector field. We conclude by observing that the only polynomial
  vector field in $\vLtwow$ is the zero vector field.
  
  It remains to prove that
  $\tmmathbf{a}^{\star} \in \mathring{H}^1( \RR^3; \RR^3 )$. We
  observe that since $\divv \tmmathbf{a}^{\star} = 0$, if we set
  $\tmmathbf{b}^{\star} \assign \curl \tmmathbf{a}^{\star}$, then
  $\tmmathbf{a}^{\star}$ is a solution of the vector Poisson equation
  $- \lapl \tmmathbf{a}= \curl \tmmathbf{b}^{\star}$. Also, since
  $\tmmathbf{b}^{\star} \in \vLtwo$, we have that
  $\curl \tmmathbf{b}^{\star}$ generates a linear and continuous
  functional on $\mathring{H}^1( \RR^3; \RR^3 )$, and therefore, by
  Riesz representation theorem, there exists a unique
  $\tmmathbf{a} \in \mathring{H}^1( \RR^3; \RR^3 )$ such that
  $- \lapl \tmmathbf{a}= \curl \tmmathbf{b}^{\star}$. But this implies
  that $\tmmathbf{a}-\tmmathbf{a}^{\star}$ is a harmonic $\vLtwow$
  vector field; therefore, necessarily
  $\tmmathbf{a}^{\star} =\tmmathbf{a} \in \mathring{H}^1 ( \RR^3;
  \RR^3 )$.  \medskip
 
 \noindent $\mathrm{(}ii \mathrm{)}$ If
 $\tmmathbf{\jmath} \in [\tmmathbf{a}] \cap \vLtwo$ then there exists
 $v_{\tmmathbf{\jmath}} \in \nabla \Dp$ such that
 $\tmmathbf{\jmath}-\tmmathbf{a}^{\star} = - \grad
 v_{\tmmathbf{\jmath}}$.  Hence,
  \begin{equation}
    - \lapl v_{\tmmathbf{\jmath}} \eqs \divv
    (\tmmathbf{\jmath}-\tmmathbf{a}^{\star}) \eqs \divv \tmmathbf{\jmath},
  \end{equation}
  and the previous equation admits a unique solution $v_{\tmmathbf{\jmath}}
  \in \mathring{H}^1( \RR^3 )$ by Riesz representation theorem for the
  dual of a Hilbert space.
  
  \medskip
  \noindent $\mathrm{(}iii \mathrm{)}$ Let
  $\tmmathbf{a}_\circ\in \mathring{H}^1( \RR^3; \RR^3 )$ be such that
  $\divv \tmmathbf{a}_\circ=0$. The variational equation
  \begin{equation}\label{eq:testinH1sol0}
    \int_{\RR^3} \curl  \tmmathbf{a} \cdot \curl
    \tmmathbf{\varphi}^\star = 
    \int_{\RR^3}  \curl \tmmathbf{a}_\circ \cdot \curl
    \tmmathbf{\varphi}^\star  \quad 
    \forall \tmmathbf{\varphi}^\star \in \vWonecurlh . 
  \end{equation}
  has a unique solution $[\tmmathbf{a}]\in \vWonecurlh$ because
  $\curl \curl \tmmathbf{a}_\circ$ can be identified with an element
  of $\mathring{H}^{-1}_{\textrm{sol}}(\curl,\RR^3) $. In particular,
  testing against functions of the type
  $\tmmathbf{\varphi}^\star:=\curl \tmmathbf{\varphi}$ with
  $ \tmmathbf{\varphi}\in \vD$, we get that
  \begin{equation}
    \curl (\curl \curl (\tmmathbf{a}-\tmmathbf{a}_\circ)) =
    \tmmathbf{0}\quad \text{in }\vDp.
  \end{equation}
  At the same time, by the result in point $(i)$ we have that
    $\tmmathbf{a}^{\star}\in \vLtwow$ is the unique divergence-free
  representative belonging to
  $[\tmmathbf{a}] \cap \mathring{H}^1( \RR^3;\RR^3 )$. This implies
  that
  \begin{equation}
    -\lapl( \curl (\tmmathbf{a}^{\star}-\tmmathbf{a}_\circ)) =
    \tmmathbf{0}\quad \text{in }\vDp, 
  \end{equation}
  with $ \curl (\tmmathbf{a}^{\star}-\tmmathbf{a}_\circ)\in
  \vLtwo$. Therefore
  $\curl (\tmmathbf{a}^{\star}-\tmmathbf{a}_\circ)=\tmmathbf{0}$,
  which means $\tmmathbf{a}_\circ\in [\tmmathbf{a}^{\star}]$. Again,
  by the uniqueness of the divergence-free representative we conclude
  that $\tmmathbf{a}_\circ=\tmmathbf{a}^{\star}$.
\end{proof}

\section{Magnetostatics}
\label{s:var}

We begin by non-dimensionalizing the micromagnetic energy, using the
exchange length
$\ell_\mathrm{ex} := \sqrt{2 A / (\mu_0 M_\mathrm{s}^2)}$ as the unit
of length. Introducing the normalized magnetization vector
$\m(\mathbf r) := \mathbf M(\ell_\mathrm{ex} \mathbf r) /
M_\mathrm{s}$ depending on the dimensionless position vector
${\mathbf r}$, the quality factor $Q := 2 K / (\mu_0 M_\mathrm{s}^2)$
associated with crystalline anisotropy, and
\begin{align}
  \label{eq:scale}
  \hd_\mathrm{d} = {\mathbf H_\mathrm{d} \over M_\mathrm{s}}, \qquad
  \ha = {\mathbf 
  H_\mathrm{a} \over M_\mathrm{s}}, \qquad \mathcal E(\m) = {E(\mathbf
  M) \over 2 A \ell_\mathrm{ex}}, 
\end{align}
we can write the micromagnetic energy in dimensionless form as
\begin{align}
  \label{eq:EE}
  \mathcal E(\m) := \frac12 \int_\Omega |\nabla \m|^2 + {Q \over 2}
  \int_\Omega \Phi(\m) - \int_\Omega \ha \cdot \m - \frac12
  \int_\Omega \hd_\mathrm{d} \cdot \m, 
\end{align}
where $\Omega$ was appropriately rescaled and the symbol $d^3 r$ is
omitted from all the integrals from now on for simplicity of
presentation. The rescaled demagnetizing field $\hd_\mathrm{d}$ and
the associated rescaled magnetic induction $\bd_\mathrm{d}$ solve
\begin{alignat}{2}
  &\curl \hd \eqs \tmmathbf{0} &\quad \text{in }
  \RR^3,   \label{eq:MA1}\\
  &\tmop{div} \bd \eqs 0 &\quad \text{in } \RR^3,  \label{eq:MA2}\\
  &\bd \eqs \hd + \m &\quad \text{in } \RR^3 .
  \label{eq:MA3}
\end{alignat}
In turn, the corresponding rescaled scalar potential $\ud_\mathrm{d}$ and
vector potential $\ad_\mathrm{d}$ are related to their unscaled counterparts
via
\begin{align}
  \label{eq:udad}
  \ud_\mathrm{d}(\mathbf r) := {U_\mathrm{d}(\ell_\mathrm{ex} \mathbf
  r) \over M_\mathrm{s} \ell_\mathrm{ex}}, 
  \qquad \ad_\mathrm{d}(\mathbf r) := {\mathbf A_\mathrm{d}
  (\ell_\mathrm{ex} \mathbf r) \over 
  \mu_0 M_\mathrm{s} \ell_\mathrm{ex}},
\end{align}
so that $\bd_\mathrm{d} = \curl \ad_\mathrm{d}$ and
$\hd_\mathrm{d} = -\nabla \ud_\mathrm{d}$. Finally, the rescaled stray
field energy is
\begin{align}
  \label{eq:Ess}
  \mathcal E_\mathrm{s}(\m) := -\frac12 \int_{\R^3} \hd_\mathrm{d}
  \cdot \m. 
\end{align}
where $\hd_\mathrm{d}$ is understood as a function of $\m$ uniquely
determined by the solution of
{\tmem{{\eqref{eq:MA1}}-{\eqref{eq:MA3}}}} (for a precise statement,
see below).

Throughout the rest of this paper, we suppress the subscript ``$d$''
everywhere to avoid cumbersome notations. However, whenever needed we
utilize the subscript $\m$ to explicitly indicate the dependence of
the associated quantities on a given magnetization $\m$, so there
should be no confusion. The main result of this section is
Theorem~\ref{mainthmscvec}. We remark that all the assumptions
  of this theorem are satisfied in the context of micromagnetics when
  the ferromagnet occupies a bounded domain.

\begin{theorem}
  \label{mainthmscvec}Let $\m \in \vLtwo$. The following assertions hold: 
  \begin{enumerate}[(i)]
    \item  There exists a unique magnetic scalar
    potential $u_{\m} \in \mathring{H}^1( \RR^3 )$ such that
    \begin{equation}
      \label{eq:hmum}
      \hd_{\m} \assign - \grad u_{\m}, \quad \bd_{\m} \assign 
      \hd_{\m} + \m,
    \end{equation}
    is a solution of {\tmem{{\eqref{eq:MA1}}-{\eqref{eq:MA3}}}} in
    $\vLtwo \times \vLtwo$. The stray field energy is given through
    the following maximization problem:
    \begin{equation}
      \mathcal E_\mathrm{s}(\m) = \max_{u \in \mathring{H}^1( \RR^3 )}
      \mathcal{W}(\m, u ), \quad \quad \mathcal{W}(
      \m, u ) \assign \int_{\RR^3} \grad u \cdot \m - \frac{1}{2}
      \int_{\RR^3} \left| \grad u \right|^2, \label{eq:minscpotthm}
    \end{equation}
    whose unique solution coincides with $u_{\m}$.  Moreover, if
   $\m\in \vLtwoww$ then $u_{\m} \in {H}^1( \RR^3 )$.
    \smallskip
    
   \item  There exists a unique magnetic vector
    potential $\left[ \tmmathbf{a}_{\m} \right] \in \vWonecurl$ such that
    \begin{equation}
      \bd_{\m}' \assign \curl [\tmmathbf{a}_{\m}], \quad \hd_{\m}' \assign
      \bd_{\m}' - \m,
    \end{equation}
    is a solution of {\tmem{{\eqref{eq:MA1}}-{\eqref{eq:MA3}}}} in
    $\vLtwo \times \vLtwo$.  The stray field energy is given through
    the following minimization problem:
    \begin{equation}
      \mathcal E_\mathrm{s}(\m) = \underset{[\tmmathbf{a}] \in
        \vWonecurl}{\min} \mathcal{V}_{\curl} 
      ( \m, [\tmmathbf{a}] ), \quad \quad \mathcal{V}_{\curl}( \m,
      [\tmmathbf{a}] ) \assign \frac{1}{2} \int_{\RR^3} \left| \curl 
        [\tmmathbf{a}] - \m \right|^2 , \label{eq:minvecpotthm}
    \end{equation}
    whose unique solution coincides with
    $\left[ \tmmathbf{a}_{\m} \right]$.

    Moreover, if $\m\in \vLtwoww$ then there exists a unique
    representative
    $\tmmathbf a^\star_{\m}\in\left[ \tmmathbf{a}_{\m} \right]$
    satisfying the Coulomb gauge conditions
    \begin{equation}
      \tmmathbf{a}_{\m}^{\star} \in \vLtwo, \quad \divv
      \tmmathbf{a}_{\m}^{\star} = 0. \label{eq:CGauge}
    \end{equation}
    The representative $\tmmathbf{a}^\star_{\m}$ belongs to
    $ H^1(\RR^3;\RR^3)$ and can be characterized as the unique
    solution in $ \mathring{H}^1( \RR^3; \RR^3 )$ of the vector
    Poisson equation
    \begin{equation} -\lapl \tmmathbf a^\star_{\m}=\curl \m\quad
      \text{in }\mathring{H}^{-1}( \RR^3; \RR^3 ).
\end{equation}
Equivalently, $\tmmathbf a^\star_{\m}$ can be characterized as the
unique solution in $\vWonecurlh $ of the variational equation
\begin{equation}\label{eq:testinH1sol}
  \int_{\RR^3} \curl  \tmmathbf{a}^\star_{\m} \cdot \curl
  \tmmathbf{\varphi}^\star = 
  \int_{\RR^3}  \m \cdot \curl \tmmathbf{\varphi}^\star  \quad
  \forall \tmmathbf{\varphi}^\star \in \vWonecurlh . 
\end{equation}
    
    \smallskip
  \item We have
    \begin{align}
      \label{eq:mmprime}
      \hd_{\m} = \hd_{\m}', \qquad \bd_{\m} = \bd_{\m}', \qquad 
      \mathcal E_\mathrm{s}(\m) = \frac12 \int_{\R^3} |\hd_{\m}|^2.       
    \end{align}

  \item If $\m\in \vLtwoww$, the stray field energy admits the
    following representation: 
    \begin{align}
      \mathcal E_\mathrm{s}(\m) = \min_{\tmmathbf{a} \in
      \mathring{H}^1( \RR^3; \RR^3 )} 
      \mathcal{V}( \m, \tmmathbf{a} ),  \quad  \mathcal{V}( \m,
      \tmmathbf{a} ) \assign \frac{1}{2} 
      \int_{\RR^3} | \nabla \tmmathbf{a} |^2 + \frac12 \int_{\RR^3}
      \left| \m \right|^2 - \int_{\RR^3} \m \cdot \curl  
      \ad,
      \label{eq:minvecpotnodiv}
    \end{align}
    and the unique minimizer of $\mathcal V(\m, \cdot)$ coincides with
    $\tmmathbf{a}_{\m}^\star$.
  \end{enumerate}
\end{theorem}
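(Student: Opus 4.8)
\emph{Proof plan for part $(iv)$.} The plan is to reduce the unconstrained identity \eqref{eq:minvecpotnodiv} to the constrained one \eqref{eq:minvecpotthm} already established in part $(ii)$, treating $\divv\ad$ as a penalization term that the minimizer is forced to annihilate. First I would note that $\mathcal V(\m,\cdot)$ is finite and well defined on $\mathring H^1(\RR^3;\RR^3)$: indeed $\grad\ad\in\vLtwo$ by definition, and $\m\in\vLtwoww\subset\vLtwo$, so $\int_{\RR^3}\m\cdot\curl\ad$ is finite by Cauchy--Schwarz; moreover $\mathcal V(\m,\cdot)$ depends only on the class $[\ad]\in\mathring H^1(\RR^3;\RR^3)$, since adding a constant vector changes neither $\grad\ad$ nor $\curl\ad$. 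Then, using \eqref{adivcurl2} and completing the square in the terms containing $\curl\ad$ and $\m$, I would rewrite, for every $\ad\in\mathring H^1(\RR^3;\RR^3)$,
\begin{equation*}
  \mathcal V(\m,\ad)=\tfrac12\int_{\RR^3}|\divv\ad|^2+\tfrac12\int_{\RR^3}|\curl\ad-\m|^2=\tfrac12\int_{\RR^3}|\divv\ad|^2+\mathcal V_{\curl}(\m,[\ad]).
\end{equation*}

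Next, since the divergence term is nonnegative and, by part $(ii)$, $\mathcal V_{\curl}(\m,[\ad])\geq\min_{[\tmmathbf b]\in\vWonecurl}\mathcal V_{\curl}(\m,[\tmmathbf b])=\mathcal E_\mathrm{s}(\m)$ (the class $[\ad]$ being an admissible competitor, as $\curl\ad\in\vLtwo$), I obtain $\mathcal V(\m,\ad)\geq\mathcal E_\mathrm{s}(\m)$ for all admissible $\ad$. To see that this bound is attained, I would use the hypothesis $\m\in\vLtwoww$: by part $(ii)$ it yields the representative $\tmmathbf a^\star_{\m}\in H^1(\RR^3;\RR^3)\subset\mathring H^1(\RR^3;\RR^3)$ of $[\tmmathbf a_{\m}]$ with $\divv\tmmathbf a^\star_{\m}=0$. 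Inserting $\ad=\tmmathbf a^\star_{\m}$ in the displayed identity annihilates the divergence term, so $\mathcal V(\m,\tmmathbf a^\star_{\m})=\mathcal V_{\curl}(\m,[\tmmathbf a_{\m}])=\mathcal E_\mathrm{s}(\m)$ by \eqref{eq:minvecpotthm}. Hence the infimum in \eqref{eq:minvecpotnodiv} is a minimum, equal to $\mathcal E_\mathrm{s}(\m)$.

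For uniqueness, I would argue that if $\ad\in\mathring H^1(\RR^3;\RR^3)$ satisfies $\mathcal V(\m,\ad)=\mathcal E_\mathrm{s}(\m)$, then in the identity $\tfrac12\int_{\RR^3}|\divv\ad|^2+\bigl(\mathcal V_{\curl}(\m,[\ad])-\mathcal E_\mathrm{s}(\m)\bigr)=0$ both nonnegative summands must vanish. Thus $\divv\ad=0$ and $[\ad]$ minimizes $\mathcal V_{\curl}(\m,\cdot)$ over $\vWonecurl$, so $[\ad]=[\tmmathbf a_{\m}]$ by the uniqueness part of $(ii)$. Consequently $\ad$ is a divergence-free representative of $[\tmmathbf a_{\m}]=[\tmmathbf a^\star_{\m}]\in\vWonecurlh$ lying in $\mathring H^1(\RR^3;\RR^3)$, and such a representative is unique by Theorem~\ref{thm:regcurl}$(i)$ (equivalently, $\ad-\tmmathbf a^\star_{\m}$ is both curl-free and divergence-free with $\grad(\ad-\tmmathbf a^\star_{\m})\in\vLtwo$, hence constant by Liouville's theorem); therefore $\ad=\tmmathbf a^\star_{\m}$ in $\mathring H^1(\RR^3;\RR^3)$.

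I do not expect a real obstacle here, as the whole argument rests on \eqref{adivcurl2} and on the already-proven part $(ii)$ and Theorem~\ref{thm:regcurl}. The two points requiring care are: (a) that the $\mathcal V_{\curl}$-minimizing class $[\tmmathbf a_{\m}]$ actually contains a divergence-free representative in $\mathring H^1(\RR^3;\RR^3)$ --- this is precisely where the hypothesis $\m\in\vLtwoww$ is needed, through part $(ii)$ --- so that the penalization term can be made to vanish without leaving the admissible class; and (b) the bookkeeping of the quotient identifications among $\mathring H^1(\RR^3;\RR^3)$, $\vWonecurl$, and $\vWonecurlh$. As an alternative, one could instead exploit the strict convexity and coercivity of $\ad\mapsto\mathcal V(\m,\ad)$ on $\mathring H^1(\RR^3;\RR^3)$ to obtain a unique minimizer directly, compute its Euler--Lagrange equation $-\lapl\ad=\curl\m$ weakly, test against arbitrary gradients to force $\divv\ad=0$, identify $\ad=\tmmathbf a^\star_{\m}$ via the characterization in part $(ii)$, and evaluate the minimal value by testing the Euler--Lagrange equation against $\ad$ itself.
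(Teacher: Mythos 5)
Your argument is correct and follows essentially the same route as the paper: both rely on the orthogonality identity \eqref{adivcurl2} to decompose $\mathcal V(\m,\ad)=\tfrac12\int_{\RR^3}|\divv\ad|^2+\mathcal V_{\curl}(\m,[\ad])$, bound from below via part $(ii)$, and saturate the bound with the Coulomb-gauge representative $\tmmathbf a^\star_\m$ from part $(ii)$. Your treatment of uniqueness (forcing both nonnegative summands to vanish, then invoking Theorem~\ref{thm:regcurl}$(i)$, or equivalently \eqref{adivcurl2} plus the quotient by constants) is in fact slightly more explicit than what the paper records, which leaves the uniqueness claim implicit.
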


\begin{proof}
  (\tmem{i}) We start with an observation that holds under minimal
  regularity assumptions. Let $\m \in \vDp$. If a solution
  $( \hd_{\m}, \bd_{\m} ) \in \vDp \times \vDp$ of
  {\eqref{eq:MA1}}-{\eqref{eq:MA3}} exists, then
  $\curl \hd_{\m} \eqs \tmmathbf{0}$ distributionally. Therefore,
  according to Poincar{\'e}-de~Rham lemma
  \cite[{\tmabbr{p. }}355]{schwartz}, there exists a magnetostatic
  potential $u_{\m} \in \Dp$ such that $\hd_{\m} = - \grad
  u_{\m}$. But then, from {\eqref{eq:MA2}} and {\eqref{eq:MA3}}, we
  get that $u_{\m}$ is a particular solution of the Poisson equation
  \begin{equation}
    \lapl u_{\m} = \divv  \m \quad \text{in }  \Dp .
    \label{eq:PoissoninDp}
  \end{equation}
  Conversely, if $u_{\m}$ is a particular solution of
  {\eqref{eq:PoissoninDp}}, then the general solution of the magnetostatic
  equations is given by
  \begin{equation}
    \hd_{\m} \assign - \grad u_{\m} + \grad v_{0}, \quad \bd_{\m}
    \assign \hd_{\m} + \m, \label{eq:MAhsol}
  \end{equation}
  for an arbitrary harmonic distribution $v_{0} \in \Dp$. Indeed,
  defining $\hd_{\m} \assign - \grad u_{\tmmathbf{m}}$ and
  $\bd_{\m} \assign \hd_{\m} + \m $ we have that
  $(\hd_{\m}, \bd_{\m})$ is a solution of
  {\eqref{eq:MA1}}-{\eqref{eq:MA3}}, and any other demagnetizing field
  differs by a gradient distribution. Taking the divergence of the
  first equation in {\eqref{eq:MAhsol}} we get that $v_{0} \in \Dp$ is
  necessarily harmonic.
  
  Now, for $\m \in \vLtwo$ we have that $\divv \m$ generates a linear
  continuous functional on $\mathring{H}^1( \RR^3 )$ and, therefore,
  by Riesz representation theorem there exists a unique
  $u_{\m} \in \mathring{H}^1( \RR^3 )$ such that
  \begin{equation}
    \int_{\RR^3} \grad u_{\m} \cdot \grad \varphi \eqs \int_{\RR^3} \m
    \cdot \grad \varphi \qquad \forall \varphi \in \mathring{H}^1(
    \RR^3 ). \label{eq:weakum} 
  \end{equation}
  Hence, setting
  \begin{equation}
    \hd_{\m} \assign - \grad u_{\m}, \quad \bd_{\m} \assign 
    \hd_{\m} + \m 
  \end{equation}
  we get a solution $( \hd_{\m}, \bd_{\m} ) \in \vLtwo \times \vLtwo$
  of {\eqref{eq:MA1}}-{\eqref{eq:MA3}}. Also, note that $u_{\m}$ is
  the unique magnetostatic potential which gives a demagnetizing field
  in $\vLtwo$. Indeed, if $- \grad u_{\m} + \grad v_{0} \in \vLtwo$
  with $v_{0}$ harmonic, then, according to Liouville's theorem
    $\grad v_{0} =\tmmathbf{0}$. Finally, a standard argument gives
  that $u_{\m}$ coincides with the unique solution of the
  {\tmem{maximization problem}} {\eqref{eq:minscpotthm}}.
  
  Now, if $\m\in \vLtwoww$ then $\m$ generates a continuous
  linear functional on $\mathring{H}^1( \RR^3; \RR^3 )$. Indeed, by
  Hardy's inequality, for every
  $\tmmathbf{\varphi} \in \mathring{H}^1( \RR^3; \RR^3 )$ we have
\begin{equation}
  \int_{\RR^3} | \m \cdot \tmmathbf{\varphi} | \leqslant \|
  \omega^{-1}\m \|_{L^2(\RR^3)}\, \| \omega
  \tmmathbf{\varphi}\|_{{L^2(\RR^3)}} \leqslant 4 \| \omega^{-1}\m
  \|_{L^2(\RR^3)}\, 
  \|\nabla\tmmathbf{\varphi}\|_{{L^2(\RR^3)}}. 
\end{equation}  
Therefore, by Riesz representation theorem there exists a unique
$\tmmathbf\psi_{\m}\in \mathring{H}^1( \RR^3; \RR^3 )$ such that
$-\lapl \psi_{\m} = \m $. We set $u_{\m} := -\divv \psi_{\m}$. Note
that $u_{\m} \in \Ltwo$ and satisfies the equation
\begin{equation}
  \lapl u_{\m} = -\divv \grad (\divv \psi_{\m})=-\divv \lapl
  \psi_{\m}=\divv \m  \quad {\text{in }  \Dp} .
\end{equation}
This implies that
$u_{\m}\in L^2(\RR^3) \cap \mathring{H}^1( \RR^3)=H^1(\RR^3)$. 
 
 \medskip
 \noindent (\tmem{ii}) Once again, we start with an observation that
 is valid under minimal regularity assumptions. Let $\m \in \vDp$. If
 a solution $( \hd_{\m}, \bd_{\m} ) \in \vDp \times \vDp$ of
 {\eqref{eq:MA1}}-{\eqref{eq:MA3}} exists, then
 $\divv \bd_{\m} \eqs \tmmathbf{0}$ distributionally. Therefore, it
 follows from Poincar{\'e}-de~Rham lemma that there exists a vector
 potential $\tmmathbf{a}_{\m} \in \vDp$ such that
 $\bd_{\m} = \curl \tmmathbf{a}_{\m}$. But then, from {\eqref{eq:MA1}}
 and {\eqref{eq:MA3}}, we get that $\tmmathbf{a}_{\m}$ is a particular
 solution of the double-curl equation
  \begin{equation}
    \curl  \curl \tmmathbf{a}_{\m} \eqs  \curl  \m \quad \text{in
    }  \vDp . \label{eq:PoissoninDpvp}
  \end{equation}
  Conversely, assume that $\bar{\tmmathbf{a}}_{\m}$ is a particular
  solution of {\eqref{eq:PoissoninDpvp}}. We claim that the general
  solution of {\eqref{eq:MA1}}-{\eqref{eq:MA3}} is given by
  \begin{equation}
    \bd_{\m} \assign \curl  \bar{\tmmathbf{a}}_{\m} + \grad v_{0},
    \quad \hd_{\m} \assign \bd_{\m} - \m
  \end{equation}
  for an arbitrary harmonic distribution $v_{0} \in \Dp$. Indeed, the
  assignment
  $\bar{\tmmathbf{b}}_{\m} \assign \curl \bar{\tmmathbf{a}}_{\m}$ and
  $\bar{\hd}_{\m} \assign \bar{\tmmathbf{b}}_{\m} - \m$ gives a
  particular solution of {\eqref{eq:MA1}}-{\eqref{eq:MA3}}. Moreover,
  any other vector field $\tmmathbf{b}$ satisfying
  {\eqref{eq:MA1}}-{\eqref{eq:MA3}} must differ from
  $\bar{\tmmathbf{b}}_{\m}$ by a curl distribution, i.e., we have
  \begin{equation}
    \tmmathbf{b}_{\tmmathbf{m}} \assign \curl ( \tmmathbf{a}_{0} +
    \bar{\tmmathbf{a}}_{\m} ), \quad \hd_{\m} \assign 
    \bd_{\m} - \m \eqs \curl ( \tmmathbf{a}_{0} +
    \bar{\tmmathbf{a}}_{\m} ) - \m , \label{eq:MAbsol}
  \end{equation}
  for some $\tmmathbf{a}_{0} \in \vDp$.  Taking the {\curl} of the
  second equation in {\eqref{eq:MAbsol}}, we get
  \begin{equation}
    \curl  \curl ( \tmmathbf{a}_{0} +
    \bar{\tmmathbf{a}}_{\m} ) - \curl  \m =\tmmathbf{0},
  \end{equation}
  and from the definition of $\bar{\tmmathbf{a}}_{\m}$ we obtain that
  $\curl \curl \bar{\tmmathbf{a}}_{0} =\tmmathbf{0}$. It follows that
  $\curl \bar{\tmmathbf{a}}_{0} = \grad v_{0}$ for some
  $v_{0} \in \Dp$. In particular, $v_{0}$ is a harmonic distribution.
  
  Now, for $\m \in \vLtwo$ we have that $\curl \m$ generates a
  linear continuous functional on $\vWonecurl$ and, therefore, by
  Riesz representation theorem there exists a unique
  $\left[ \tmmathbf{a}_{\m} \right] \in \vWonecurl$ such that
  \begin{equation}
    \int_{\RR^3} \curl [\tmmathbf{a}_{\m}] \cdot \curl \tmmathbf{\psi} \eqs
    \int_{\RR^3}  \m \cdot \curl \tmmathbf{\psi} \quad \forall
    \tmmathbf{\psi} \in \vWonecurl \label{eq:weakformcurl} .
  \end{equation}
  Hence, setting
  \begin{equation}
    \bd_{\m}' \assign \curl [\tmmathbf{a}_{\m}], \quad \hd_{\m}' \assign
    \bd_{\m}' - \m, 
  \end{equation}
  we get a solution
  $( \hd_{\m}', \bd_{\m}' ) \in \vLtwo \times \vLtwo$ {of
    {\eqref{eq:MA1}}-{\eqref{eq:MA3}}}.  Note that $\tmmathbf{a}_{\m}$
  is the unique magnetostatic potential which gives
  {$\tmmathbf{b}_{\m} \in \vLtwo$}. Indeed, if
  $\curl \tmmathbf{a}_{\m} + \grad v_{0} \in \vLtwo$ and $v_{0}$ is
  harmonic, then necessarily $\grad v_{0} =\tmmathbf{0}$.  From the
  preceding considerations, it is clear that the variational
  characterization {\eqref{eq:minvecpotthm}} holds.

  Next, as in the proof of (\emph{i}), for $\m\in \vLtwoww$
  there exists a unique
  $\tmmathbf\psi_{\m}\in \mathring{H}^1( \RR^3; \RR^3 )$ such that
  $-\lapl \psi_{\m} = \m $. We set
  $\tmmathbf{a}^\star_{\m} := \curl \psi_{\m}$. Note that
  $\tmmathbf{a}^\star_{\m} \in \vLtwo$ and, by construction,
  $\divv \tmmathbf{a}^\star_{\m}=0$. Also, $\tmmathbf{a}^\star_{\m}$
  satisfies the equation
  \begin{equation} \label{eq:curlomegam} \curl \tmmathbf{a}^\star_{\m}
    = \curl \curl \tmmathbf\psi_{\m} =\m + \grad \divv
    \tmmathbf\psi_{\m} .
\end{equation}
But $\divv \tmmathbf\psi_{\m}\in L^2(\RR^3)$ satisfies
$-\lapl(\divv \tmmathbf\psi_{\m})=\divv \m$, and therefore
$\grad \divv \tmmathbf\psi_{\m}\in L^2(\RR^3;\RR^3)$. Overall, from
\eqref{eq:curlomegam}, we infer that $[\tmmathbf{a}^\star_{\m}]$ is an
element of $\vWonecurl$ satisfying \eqref{eq:weakformcurl}. It follows
that $[ \tmmathbf{a}^\star_{\m}]=[\tmmathbf{a}_{\m}]$ and
$\divv \tmmathbf{a}^\star_{\m}=0$. Also, from \eqref{eq:curlomegam} we
know that $\tmmathbf{a}^\star_{\m}$ solves the equation
$-\lapl \tmmathbf{a}^\star_{\m} = \curl \m$ with data $\curl \m$ in
$\mathring{H}^{-1}( \RR^3; \RR^3 )$. Hence,
$\tmmathbf{a}^\star_{\m}\in {H}^{1}( \RR^3; \RR^3 )$.

Finally, if $[\tmmathbf{a}^{\star\star}_{\m}] \in \vWonecurlh$ is the
unique solution of \eqref{eq:testinH1sol} and
$\tmmathbf{a}^{\star\star}_{\m} \in L^2_\omega(\RR^3;\RR^3)$ its
unique divergence-free representative, testing against
$\tmmathbf{\varphi}^\star=\curl \tmmathbf{\varphi}$ with
$\tmmathbf{\varphi} \in \mathcal{D}(\RR^3;\RR^3)$ we get
\begin{equation}
  \curl \curl \tmmathbf{a}^{\star\star}_{\m}=\curl \m +\grad v_0\quad
  \text{in }\mathcal{D}'(\RR^3;\RR^3), 
\end{equation}
for some harmonic polynomial $v_0$. Therefore, since
  $\tmmathbf{a}^{\star\star}_{\m}$ is divergence-free, we have
\begin{equation}
  -\lapl(\curl(\tmmathbf{a}^{\star\star}_{\m}-\tmmathbf{a}^{\star}_{\m}))
  =\tmmathbf{0},  
\end{equation}
with
$\curl(\tmmathbf{a}^{\star\star}_{\m}-\tmmathbf{a}^{\star}_{\m})\in
\vLtwo$. But this means that
$\tmmathbf{a}^{\star\star}_{\m}=\tmmathbf{a}^{\star}_{\m}+\grad v$
with $v$ harmonic and $\grad v\in L^2_\omega(\RR^3;\RR^3)$. Therefore
$\grad v=\tmmathbf{0}$. This concludes the proof of (\emph{ii}).  

  \noindent (\tmem{iii}) The first two equalities in
  \eqref{eq:mmprime} follow from the uniqueness of solutions of
  {\eqref{eq:MA1}}-{\eqref{eq:MA3}} in $L^2(\R^3; \R^3)$. The third
  equality in \eqref{eq:mmprime} follows from \eqref{eq:hmum} and
  \eqref{eq:minscpotthm}.

  \noindent (\tmem{iv}) From \eqref{eq:testinH1sol} it is clear that
  \begin{equation}
    \mathcal E_\mathrm{s}(\m) = \min_{[\tmmathbf{a}^{\star}] \in
      \vWonecurlh} \mathcal{V}_{\curl}( 
    \m, [\tmmathbf{a}^{\star}] ),
  \end{equation}
  where we noted that the minimum above is attained because
  $\vWonecurlh$ is a closed subspace of the Hilbert space
  $\vWonecurl$.  Since $\mathring{H}^1 ( \RR^3; \RR^3 )$ can be
  identified with a subset of $\vWonecurl$, and {\eqref{eq:CGauge}}
  holds, it is sufficient to show that
  \begin{equation}
    \min_{\tmmathbf{a} \in \mathring{H}^1( \RR^3; \RR^3 )} \mathcal{V}
    ( \m, \tmmathbf{a} ) \; \leqslant \mathcal E_\mathrm{s}(\m). 
    \label{eq:lowboundenV}
  \end{equation}
  To this end, we observe that if
  $\left[ \tmmathbf{a}^{\star}_{\m} \right] \in \vWonecurlh$ minimizes
  $\mathcal{V}_{\curl}( \m, [\tmmathbf{a}^{\star}] )$, then, without
  loss of generality, we can assume that $\tmmathbf{a}^{\star}_{\m}$
  is the unique representative satisfying the Coulomb gauge regularity
  conditions {\eqref{eq:CGauge}}. But then, since
  $\divv \tmmathbf{a}^{\star}_{\m} = 0$, by \eqref{adivcurl2} we
  have
  \begin{equation}
    \tmmathbf{a}^{\star}_{\m} \in \mathring{H}^1( \RR^3; \RR^3 ), \quad
    \mathcal{V}( \m, \tmmathbf{a}^{\star}_{\m} )
    =\mathcal{V}_{\curl}( \m, \left[
      \tmmathbf{a}^{\star}_{\m} \right] ),
  \end{equation}
  and this implies {\eqref{eq:lowboundenV}}.
\end{proof}


\begin{remark}
  The weight $\omega$ in the assumptions on $\m$ imposes the behavior
  at infinity of the magnetostatic potential $u_{\m}$. Note that in
  general $u_{\m}$ does not belong to $H^1(\R^3)$ if
  $\m \in L^2(\R^3;\R^3)$. To see this consider $\m = -\nabla u$ with
  $u \in \mathring{H}^1(\R^3) \setminus H^1(\R^3)$. However, it is
  known that $u \in H^1(\R^3)$ provided $\m \in L^2(\R^3; \R^3)$ has
  compact support \cite{james90, praetorius04}. The above theorem
  gives a generalization of this result to a wider class of functions
  $\m \in L^2_{\omega^{-1}} (\R^3; \R^3)$.
\end{remark}

\begin{remark}
  If $u_{\tmmathbf{m}'}$ is the unique weak solution of
  $\lapl u_{\tmmathbf{m}'} = \divv \tmmathbf{m}'$, with
  $\tmmathbf{m}' \in \vLtwo$, then testing against
  $\varphi \assign u_{\tmmathbf{m}'}$ in the weak formulation of
  $\lapl u_{\m} = \divv \m$, and testing against
  $\varphi \assign u_{\m}$ in the weak formulation of
  $\lapl u_{\tmmathbf{m}'} = \divv \tmmathbf{m}'$, we get the
  so-called {\tmem{reciprocity relations}}
  \begin{equation}\label{eq:eequality1}
    \int_{\RR^3} \hd_{\m} \cdot \hd_{\tmmathbf{m}'} \eqs - \int_{\RR^3} \m
    \cdot \hd_{\tmmathbf{m}'} \eqs - \int_{\RR^3} \hd_{\m} \cdot
    \tmmathbf{m}'. 
  \end{equation}
  Thus, the operator
  $\mathcal H : \m \in \vLtwo \mapsto \hd_{\m} \in \vLtwo$ is
  self-adjoint, and for $\m =\tmmathbf{m}'$ we recover the expression
  of $\mathcal{E}_\mathrm{s}(\m)$ in \eqref{eq:mmprime}. Furthermore,
  $\mathcal H$ has unit norm, as can be seen from
  \begin{align}
    \left\| \hd_{\m} \right\|_{L^2(\RR^3)} \leqslant \| \m
    \|_{L^2(\RR^3)} \qquad \forall \m \in L^2(\R^3; \R^3),
  \end{align}
  with equality achieved for all $\m = \nabla v$ with
  $v \in \mathring{H}^1(\R^3)$.  Additionally, it is possible to prove
  that the spectrum of $\mathcal H$ is at most countable and contained
  in the interval $[0,1]$. Note that any element $\m \in \vDsol$, in
  particular, any configuration built as in Remark
  \ref{rmk:Dsolinfinitedim} belongs to the kernel of $\mathcal H$ (see
  \cite{friedman1981mathematical3} for a detailed analysis). Finally,
  we recall that $\mathcal H$ maps constant magnetizations in $\Omega$
  (and zero outside) into constant magnetic fields in $\Omega$ (but
  not constant outside) if and only if $\Omega$ is an ellipsoid
  \cite{di1986bubble,karp1994newtonian,di2016newtonian}. Thus, if
  $\Omega$ is an ellipsoid, the restriction of $\mathcal H$ to
  three-dimensional constant vector fields in $\Omega$ defines a
  finite-dimensional linear operator (the so called demagnetizing
  tensor), whose eigenvalues (the so-called demagnetizing factors) are
  among the most important quantities in ferromagnetism
  \cite{osborn1945demagnetizing}.
\end{remark}

\section{Micromagnetics of curved thin shells}

We now illustrate the utility of the variational principles discussed
in section \ref{s:var} in the case of dimension reduction for thin
ferromagnetic shells. Previously such results have been established
under suitable technical assumptions on the geometry of the surface in
the case of thin layers \cite{carbou01}, and shells enclosing convex
bodies \cite{difratta16}. Here we use Theorem \ref{mainthmscvec} to
give an elementary proof of the dimension reduction via
$\Gamma$-convergence, which does not require convexity or other purely
technical assumptions on the shape of the shell.

Let $\Omega$ be a bounded domain in $\RR^3$. For any
$\m \in H^1( \Omega, \Stwo^2 )$, the micromagnetic energy functional
in \eqref{eq:EE} in the absence of crystalline anisotropy and the
applied magnetic field reads
\begin{equation}
  \mathcal{G}_{\Omega}( \m ) \assign \frac{1}{2} \int_{\Omega} \left( 
    | \grad \m |^2 - \tmmathbf h_{\m} \cdot \m \right),
\end{equation}
where $\tmmathbf h_{\m}$ is the solution of
\eqref{eq:MA1}--\eqref{eq:MA3} with $\m$ extended by zero outside
$\Omega$.  Taking into account Theorem \ref{mainthmscvec}, the
following equivalent expressions arise:
\begin{eqnarray}
  \mathcal{G}_{\Omega}( \m )
  & \eqs & \frac{1}{2} \int_{\Omega}
           \left| \grad \m \right|^2 + \min_{\tmmathbf{a} \in
           \mathring{H}^1( \RR^3; 
           \RR^3 )} \mathcal{V}( \m, \tmmathbf{a} ), \\ 
  \mathcal{G}_{\Omega}( \m )
  & \eqs & \frac{1}{2} \int_{\Omega}
           \left| \grad \m \right|^2 + \max_{u \in \mathring{H}^1( \RR^3 )}
           \mathcal{W}(\m, u ) . 
\end{eqnarray}
In particular, if we define
\begin{equation}
  \mathcal{G}_{\Omega}( \m, \tmmathbf{a} ) \assign \frac{1}{2}
  \int_{\Omega} \left| \grad \m \right|^2 +\mathcal{V}( \m, 
  \tmmathbf{a} ), \quad \mathcal{G}_{\Omega}( \m, u )
  \assign \frac{1}{2} \int_{\Omega} \left| \grad \m \right|^2 +\mathcal{W}
  ( \m, u )
\end{equation}
then
\begin{eqnarray}
  \min_{\m \in H^1( \Omega, \Stwo^2 )} \mathcal{G}_{\Omega}(
  \m ) & \eqs & \min_{\m \in H^1( \Omega, \Stwo^2
  )} \min_{\tmmathbf{a} \in \mathring{H}^1( \RR^3; \RR^3 )}
  \mathcal{G}_{\Omega}( \m, \tmmathbf{a} ), \\
  \min_{\m \in H^1( \Omega, \Stwo^2 )} \mathcal{G}_{\Omega}(
  \m ) & \eqs & \min_{\m \in H^1( \Omega, \Stwo^2 )} \max_{u
  \in \mathring{H}^1( \RR^3 )} \mathcal{G}_{\Omega}( \m, u) . 
\end{eqnarray}
Thus, the minimization problem for the micromagnetic energy functional
$H^1 ( \Omega, \Stwo^2 )$ can be restated as a minimization problem on
the product space
$H^1( \Omega, \Stwo^2 ) \times \mathring{H}^1( \RR^3; \RR^3 )$, or as
a minimax problem on the spaces
$H^1( \Omega, \Stwo^2 ) \times \mathring{H}^1( \RR^3 )$.

Let $S$ be a compact $C^2$ surface in $\RR^3$. It is
well-known that $S$ is orientable and admits a tubular neighborhood
({\tmabbr{cf.}}~{\cite[Prop.~1, p.~113]{docarmo}}). Precisely, let
$\tmmathbf{n}: S \rightarrow \Stwo^2$ be the unit normal vector field
associated with the choice of an orientation of $S$. For every
$\xi \in S$, $\delta \in \RR_+$, denote by
$\ell_{\delta} (\xi) \assign \{ \xi + t\tmmathbf{n} (\xi) \}_{| t | <
  \delta}$ the normal segment to $S$ having radius $\delta$ and
centered at $\xi$. Then, there exists $\delta \in \RR_+$ such that the
following properties hold
({\tmabbr{cf.}}~{\cite[{\tmabbr{p.}}~112]{docarmo}}):
\begin{itemize}
  \item For every $\xi_1, \xi_2 \in S$ one has $\ell_{\delta} (\xi_1) \cap
  \ell_{\delta} (\xi_2) = \emptyset$ whenever $\xi_1 \neq \xi_2$.
  
  \item The union $\Omega_{\delta} \assign \cup_{\xi \in S} \ell_{\delta}
  (\xi)$ is an open set of $\RR^3$ containing $S$.
  
\item For $I:=(-1,1)$, set $\mathcal{M} \assign S \times I$. For every
  $\varepsilon \in I_{\delta}^+ \assign (0, \delta)$, the map
  \begin{equation}
    \psi_{\varepsilon} : (\xi, t) \in \mathcal{M} \mapsto \xi + \varepsilon
    t\tmmathbf{n} (\xi) \in \Omega_{\varepsilon} \label{eq:diffeomorphism}
  \end{equation}
  is a $C^1$ diffeomorphism of the product manifold $\mathcal{M}$ onto
  $\Omega_{\varepsilon}$. In particular, the nearest point projection
  $\pi : \Omega_{\varepsilon} \rightarrow S$, which maps any
  $x \in \Omega_{\varepsilon}$ onto the unique $\xi \in S$ such that
  $x \in \ell_{\varepsilon} (\xi)$, is a $C^1$ map. All integrals over
  $\mathcal M$ are with respect to the measure
  $\mathcal H^2 \times \mathcal L^1$.
\end{itemize}
The open set $\Omega_{\delta}$ is then called a tubular neighborhood
of $S$ of radius $\delta$. Note that
$\Omega_{\delta} \equiv \psi_{\delta} (\mathcal{M})$.  

In what follows, the symbols $\tau_1 (\xi), \tau_2 (\xi)$ denote the
orthonormal basis of $T_{\xi} S$ made by the principal directions at $\xi \in
S$. Also, we denote by $\sqrt{\mathfrak{g}_{\epsilon}}$ the metric factor
which relates the volume form on $\Omega_{\varepsilon}$ to the volume form on
$\mathcal{M}$, and by $\mathfrak{h}_{1, \varepsilon}, \mathfrak{h}_{2,
\varepsilon}$ the metric coefficients which transform the gradient on
$\Omega_{\varepsilon}$ into the gradient on $\mathcal{M}$. A direct
computation shows that
\begin{equation}\label{eq:hg}
  \sqrt{\mathfrak{g}_{\varepsilon} (\xi, t)} \assign | 1 + 2 \varepsilon t H
  (\xi) + \varepsilon^2 t^2 G (\xi) |, \quad \mathfrak{h}_{i, \varepsilon}
  (\xi, t) \assign (1 + \varepsilon t \kappa_i (\xi))^{- 1} \quad( i \in
  \NN_2 ),
\end{equation}
where $H (\xi)$ and $G (\xi)$ are, respectively, the mean and Gaussian
curvature at $\xi \in S$, and $\kappa_1 (\xi), \kappa_2 (\xi)$ are the
principal curvatures at $\xi \in S$. In what follows we always assume
the thickness $\delta$ to be sufficiently small so that the quantities
in \eqref{eq:hg} are uniformely bounded from both above and below by
some positive constants depending only on $S$.

We denote by $H^1( \mathcal{M}; \RR^3 )$ the Sobolev space of
vector-valued functions defined on $\mathcal{M}$ endowed with the norm $\|
\tmmathbf{m} \|^2_{H^1 (\mathcal{M})} \assign \| \tmmathbf{m} \|^2_{L^2
(\mathcal{M})} + \| \nabla_{\xi} \tmmathbf{m} \|^2_{L^2 (\mathcal{M})} + \|
\partial_t \tmmathbf{m} \|^2_{L^2 (\mathcal{M})}$ where $\nabla_{\xi}
\tmmathbf{m}$ stands for the tangential gradient of $\tmmathbf{m}$ on
$S$.
Finally, we write $H^1( \mathcal{M}; \Stwo^2 )$ for the subset of
$H^1( \mathcal{M}; \RR^3 )$ consisting of functions taking values
in $\Stwo^2$.

Next, for every $\varepsilon \in I_{\delta}^+$ we consider the
micromagnetic energy functional on
$H^1( \Omega_{\varepsilon}, \Stwo^2 )$ which, after normalization,
reads
\begin{eqnarray}
  \mathcal{G}_{\varepsilon} (\widetilde{\tmmathbf{m}}) & \assign & \frac{1}{2
  \varepsilon} \int_{\Omega_{\varepsilon}} \left| \grad \widetilde{\tmmathbf{m}}
  \right|^2 + \frac{1}{2 \varepsilon} \int_{\RR^3} \left| \grad
  u_{\widetilde{\tmmathbf{m}}} \right|^2, 
\end{eqnarray}
with $u_{\widetilde{\tmmathbf{m}}}$ being the unique solution in $\Wonegradh$ of
the Poisson equation $\lapl u_{\widetilde{\tmmathbf{m}}} = \divv 
\widetilde{\tmmathbf{m}}$, with the understanding that $\widetilde{\tmmathbf{m}}$ is
extended by zero outside of $\Omega_{\varepsilon}$. The change of variables
{\eqref{eq:diffeomorphism}} allows for the following equivalent expression of
the micromagnetic energy functional
\begin{equation}
  \mathcal{F}_{\varepsilon} (\tmmathbf{m}) \assign \mathcal{E}_{\varepsilon}
  (\tmmathbf{m}) + \frac{1}{2 \varepsilon} \int_{\RR^3} \left| \grad
  u_{\widetilde{\tmmathbf{m}}} \right|^2,
\end{equation}
with $\tmmathbf{m} (\xi, t) \assign \widetilde{\tmmathbf{m}} \circ
\psi_{\varepsilon} (\xi, t) \in H^1( \mathcal{M}; \Stwo^2 )$ for
$\widetilde{\tmmathbf{m}} \in H^1( \Omega_{\varepsilon}, \Stwo^2 )$,
and $\mathcal{E}_{\varepsilon}$ the family of Dirichlet energies on
$\mathcal{M}$ defined by
\begin{equation}
  \mathcal{E}_{\varepsilon} (\tmmathbf{m}) \assign \frac{1}{2}
  \int_{\mathcal{M}} \sum_{i \in \NN_2} | \mathfrak{h}_{i, \varepsilon}
  \partial_{\tau_i (\xi)} \tmmathbf{m} |^2  \sqrt{\mathfrak{g}_{\varepsilon}}
  + \frac{1}{2 \varepsilon^2} \int_{\mathcal{M}} | \partial_t \tmmathbf{m} |^2
  \sqrt{\mathfrak{g}_{\varepsilon}} . \label{eq:exenergy}
\end{equation}
We are interested in the limiting behavior of the minimizers of
$\mathcal{F}_{\varepsilon}$ when $\varepsilon \rightarrow 0$. In that regard,
we prove the following $\Gamma$-convergence result. 

\begin{theorem}\label{th:thinfilm}
  As $\varepsilon \to 0$, the following statements hold:
\begin{enumerate}
\item If the sequence
  $(\m_\varepsilon) \subset H^1(\mathcal M; \Stwo^2)$ satisfies
  $\mathcal{F}_{\varepsilon} (\m_\varepsilon) \leqslant C$, then upon
  possible extraction of a subsequence there exists
  $\m_0 \in H^1(\mathcal M; \Stwo^2)$ such that
  $\m_\varepsilon \rightharpoonup \m_0$ weakly in
  $H^1(\mathcal M; \Stwo^2)$.
\item The family
  $(\mathcal{F}_{\varepsilon})_{\varepsilon \in I_{\delta}^+}$ is
  equi-coercive in the weak topology of $H^1( \mathcal{M}; \Stwo^2 )$,
  and $(\mathcal{F}_{\varepsilon})_{\varepsilon \in I_{\delta}^+}$
  $\Gamma$-converges in that topology to the functional
  \begin{equation}
    \mathcal{F}( \m ) =  \begin{cases} \displaystyle
      \frac12 \int_{\mathcal M} \left[  | \nabla_{\xi}
        \tmmathbf{m} |^2 + (\tmmathbf{m} \cdot 
        \tmmathbf{n})^2 \right] \mathd \xi & \text{{\tmem{if}} }
      \partial_t \tmmathbf{m}= 0,\\
      + \infty & \text{{\tmem{otherwise}}}.
    \end{cases} 
  \end{equation}
\item If $\m_\varepsilon$ are minimizers of
  $\mathcal{F}_{\varepsilon}$, then upon possible extraction of a
  subsequence $(\m_\varepsilon)$ converges strongly in
  $H^1( \mathcal{M}; \Stwo^2 )$ to a minimizer of $\mathcal{F}$.
  \end{enumerate}
\end{theorem}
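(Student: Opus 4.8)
\emph{Overall strategy.} The plan is to verify the three items by the classical route for $\Gamma$-convergence: (a) compactness together with equi-coercivity, (b) the $\Gamma$-$\liminf$ inequality, and (c) the existence of recovery sequences; item (3) then follows from (a)--(c) plus an argument upgrading weak to strong convergence. For $\m\in H^1(\mathcal M;\Stwo^2)$ write $\widetilde{\m}$ for the zero-extension of $\m\circ\psi_\varepsilon^{-1}$, as in the definition of $\mathcal F_\varepsilon$, and split $\mathcal F_\varepsilon(\m)=\mathcal E_\varepsilon(\m)+\tfrac1\varepsilon\mathcal E_\mathrm s(\widetilde{\m})$, using $\tfrac1{2\varepsilon}\int_{\RR^3}|\grad u_{\widetilde{\m}}|^2=\tfrac1\varepsilon\mathcal E_\mathrm s(\widetilde{\m})$ from \eqref{eq:mmprime}. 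The exchange part I would handle directly: by the expansions \eqref{eq:hg} and the assumed uniform bounds, the tangential integrand in \eqref{eq:exenergy} equals $\langle G_\varepsilon\nabla_\xi\m,\nabla_\xi\m\rangle$ for a symmetric positive field $G_\varepsilon$ with $\|G_\varepsilon-\mathrm{Id}\|_{L^\infty(\mathcal M)}\to0$, so on sequences with $\sup_\varepsilon\|\nabla_\xi\m_\varepsilon\|_{L^2(\mathcal M)}<\infty$ the tangential part of $\mathcal E_\varepsilon(\m_\varepsilon)$ is $\tfrac12\|\nabla_\xi\m_\varepsilon\|_{L^2(\mathcal M)}^2+o(1)$, while the normal part $\tfrac1{2\varepsilon^2}\int_{\mathcal M}|\partial_t\m_\varepsilon|^2\sqrt{\mathfrak g_\varepsilon}$ is non-negative and, on bounded-energy sequences, forces $\|\partial_t\m_\varepsilon\|_{L^2(\mathcal M)}^2=O(\varepsilon^2)$. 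Since $|\m_\varepsilon|\equiv1$ bounds $\|\m_\varepsilon\|_{L^2(\mathcal M)}$, a bounded-energy sequence is bounded in $H^1(\mathcal M;\RR^3)$; a subsequence then converges weakly in $H^1$ and a.e.\ to some $\m_0$ with $|\m_0|\equiv1$ and $\partial_t\m_0=0$. This proves (1), and since $\mathcal F_\varepsilon(\m)\ge c\|\m\|_{H^1(\mathcal M)}^2-C$ uniformly for small $\varepsilon$, the family is equi-coercive in the weak topology. The exchange $\Gamma$-$\liminf$ is then weak lower semicontinuity of $\m\mapsto\tfrac12\|\nabla_\xi\m\|_{L^2}^2$ (the value $+\infty$ when $\partial_t\m_0\ne0$ cannot occur on a finite-liminf sequence), and for the recovery sequence the constant choice $\m_\varepsilon=\m_0$ (legitimate once $\partial_t\m_0=0$, the only case to consider) gives $\mathcal E_\varepsilon(\m_0)\to\tfrac12\int_{\mathcal M}|\nabla_\xi\m_0|^2$ by dominated convergence.

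\emph{Stray field, lower bound.} Here the variational principles of Theorem~\ref{mainthmscvec} do the work. Fix $\phi\in C^1(S)$ and test the maximization principle \eqref{eq:minscpotthm} against $u_\varepsilon(x):=f_\varepsilon(d(x))\,\phi(\pi(x))$, where $d$ is the signed distance to $S$ (so $\grad d=\tmmathbf n\circ\pi$ on $\Omega_\delta$) and $f_\varepsilon:\RR\to\RR$ is odd, equal to the identity on $[-\varepsilon,\varepsilon]$, affine and decreasing from $\varepsilon$ to $0$ on $[\varepsilon,\sqrt\varepsilon]$, and $0$ beyond, so that $|f_\varepsilon|\lesssim\varepsilon$ and $|f_\varepsilon'|\lesssim\sqrt\varepsilon$ on the transition layer. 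On $\Omega_\varepsilon$ one has $\grad u_\varepsilon=\phi(\pi)\,\tmmathbf n\circ\pi+O(\varepsilon)$, while on $\Omega_{\sqrt\varepsilon}\setminus\Omega_\varepsilon$ one has $|\grad u_\varepsilon|\lesssim\sqrt\varepsilon$; since $|\Omega_\varepsilon|=O(\varepsilon)$ and $|\Omega_{\sqrt\varepsilon}|=O(\sqrt\varepsilon)$, that layer contributes only $O(\varepsilon^{3/2})$ to $\int_{\RR^3}|\grad u_\varepsilon|^2$. Changing variables by $\psi_\varepsilon$ and using $\sqrt{\mathfrak g_\varepsilon}\to1$ uniformly with $\m_\varepsilon\to\m_0$ in $L^2(\mathcal M)$, I expect $\tfrac1\varepsilon\int_{\RR^3}\grad u_\varepsilon\cdot\widetilde{\m}_\varepsilon\to2\int_S\phi\,(\m_0\cdot\tmmathbf n)$ and $\tfrac1{2\varepsilon}\int_{\RR^3}|\grad u_\varepsilon|^2\to\int_S\phi^2$, whence $\liminf_\varepsilon\tfrac1\varepsilon\mathcal E_\mathrm s(\widetilde{\m}_\varepsilon)\ge2\int_S\phi\,(\m_0\cdot\tmmathbf n)-\int_S\phi^2$ for every admissible $\phi$. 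Maximizing over such $\phi$, dense in $L^2(S)$, then yields $\liminf_\varepsilon\tfrac1\varepsilon\mathcal E_\mathrm s(\widetilde{\m}_\varepsilon)\ge\int_S(\m_0\cdot\tmmathbf n)^2=\tfrac12\int_{\mathcal M}(\m_0\cdot\tmmathbf n)^2$, the asserted $\Gamma$-$\liminf$ of the stray part; combined with the exchange $\liminf$ and $\partial_t\m_0=0$ this gives $\liminf_\varepsilon\mathcal F_\varepsilon(\m_\varepsilon)\ge\mathcal F(\m_0)$.

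\emph{Stray field, upper bound, and minimizers.} For the recovery sequence $\m_\varepsilon=\m_0$ I would use the \emph{minimization} principle \eqref{eq:minvecpotthm}. On $\Omega_\varepsilon$ split $\widetilde{\m}_0=\widetilde{\m}_0^\parallel+\widetilde{\m}_0^\perp$ with $\widetilde{\m}_0^\perp:=\big(\widetilde{\m}_0\cdot(\tmmathbf n\circ\pi)\big)\,\tmmathbf n\circ\pi$, so $\widetilde{\m}_0^\parallel=(\m_0^\parallel\circ\pi)\,\mathbf 1_{\Omega_\varepsilon}\in L^2(\RR^3;\RR^3)$. Since $S$ is closed and $\widetilde{\m}_0^\parallel$ is tangent to $S$, hence to both parallel faces of $\Omega_\varepsilon$, integration by parts gives $\divv\widetilde{\m}_0^\parallel=\divv(\m_0^\parallel\circ\pi)\,\mathbf 1_{\Omega_\varepsilon}$ with \emph{no} surface charge, and $\|\divv\widetilde{\m}_0^\parallel\|_{L^2(\RR^3)}^2=O(\varepsilon)$ because $\m_0^\parallel\circ\pi$ is constant along normals and $\m_0^\parallel\in H^1(S;\RR^3)$. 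Writing $\grad p_\varepsilon:=\grad u_{\widetilde{\m}_0^\parallel}$, so $\|\grad p_\varepsilon\|_{L^2}^2=2\mathcal E_\mathrm s(\widetilde{\m}_0^\parallel)$, the Sobolev inequality $\|v\|_{L^6(\RR^3)}\le C\|\grad v\|_{L^2(\RR^3)}$ on $\mathring H^1(\RR^3)$ and Hölder give
\begin{equation*}
  \|\grad p_\varepsilon\|_{L^2}^2=-\int_{\RR^3}p_\varepsilon\,\divv\widetilde{\m}_0^\parallel\le C\,\|\divv\widetilde{\m}_0^\parallel\|_{L^{6/5}}\,\|\grad p_\varepsilon\|_{L^2}\le C\,\|\divv\widetilde{\m}_0^\parallel\|_{L^2}\,|\Omega_\varepsilon|^{1/3}\,\|\grad p_\varepsilon\|_{L^2},
\end{equation*}
hence $\|\grad p_\varepsilon\|_{L^2}^2\le C\,\|\divv\widetilde{\m}_0^\parallel\|_{L^2}^2\,|\Omega_\varepsilon|^{2/3}=O(\varepsilon^{5/3})=o(\varepsilon)$. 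The field $\widetilde{\m}_0^\parallel-\grad p_\varepsilon\in L^2(\RR^3;\RR^3)$ is divergence-free, hence equals $\curl[\tmmathbf a_\varepsilon]$ for some $[\tmmathbf a_\varepsilon]\in\vWonecurl$ by the Poincar\'e--de~Rham lemma, so \eqref{eq:minvecpotthm} gives
\begin{equation*}
  \mathcal E_\mathrm s(\widetilde{\m}_0)\le\tfrac12\int_{\RR^3}\big|\curl[\tmmathbf a_\varepsilon]-\widetilde{\m}_0\big|^2=\tfrac12\int_{\RR^3}\big|\widetilde{\m}_0^\perp+\grad p_\varepsilon\big|^2\le\tfrac12\big(\|\widetilde{\m}_0^\perp\|_{L^2}+\|\grad p_\varepsilon\|_{L^2}\big)^2=\varepsilon\int_S(\m_0\cdot\tmmathbf n)^2+o(\varepsilon),
\end{equation*}
using $\|\widetilde{\m}_0^\perp\|_{L^2}^2=2\varepsilon\int_S(\m_0\cdot\tmmathbf n)^2+o(\varepsilon)$ from the change of variables. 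Together with $\mathcal E_\varepsilon(\m_0)\to\tfrac12\int_{\mathcal M}|\nabla_\xi\m_0|^2$ this yields $\limsup_\varepsilon\mathcal F_\varepsilon(\m_0)\le\mathcal F(\m_0)$, completing the $\Gamma$-convergence and the equi-coercivity in (2). For (3): if $\m_\varepsilon$ minimizes $\mathcal F_\varepsilon$, then by (1) a subsequence converges weakly in $H^1(\mathcal M;\Stwo^2)$ to some $\m_0$, which by $\Gamma$-convergence minimizes $\mathcal F$, and $\mathcal F_\varepsilon(\m_\varepsilon)\to\mathcal F(\m_0)$; since the separate $\liminf$ bounds for the tangential exchange, the normal exchange ($\ge0$), and the stray terms are $\tfrac12\|\nabla_\xi\m_0\|^2$, $0$ and $\tfrac12\int_{\mathcal M}(\m_0\cdot\tmmathbf n)^2$, and these add to $\mathcal F(\m_0)$, each must converge to its bound, so $\|\nabla_\xi\m_\varepsilon\|_{L^2(\mathcal M)}\to\|\nabla_\xi\m_0\|_{L^2(\mathcal M)}$ and $\|\partial_t\m_\varepsilon\|_{L^2(\mathcal M)}\to0$; with weak $H^1$-convergence and Rellich this gives $\m_\varepsilon\to\m_0$ strongly in $H^1(\mathcal M;\Stwo^2)$.

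\emph{Main obstacle.} I expect the delicate points to be entirely in the stray-field analysis: on the lower-bound side, tuning the profile $f_\varepsilon$ so that the transition layer $\Omega_{\sqrt\varepsilon}\setminus\Omega_\varepsilon$ costs only $o(\varepsilon)$ in $\int_{\RR^3}|\grad u_\varepsilon|^2$; and on the upper-bound side, the fact that the tangential component carries only $o(\varepsilon)$ of stray-field energy, which rests on the vanishing of the face charges (closedness of $S$ together with tangency of $\widetilde{\m}_0^\parallel$ to the faces) and on the gain $|\Omega_\varepsilon|^{2/3}=o(1)$ from the thinness of the shell via Sobolev embedding. The remaining ingredients---the metric expansions \eqref{eq:hg}, weak lower semicontinuity, density of $C^1(S)$ in $L^2(S)$, and the abstract bookkeeping of $\Gamma$-convergence---are routine.
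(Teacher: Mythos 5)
Your proposal is correct in its overall strategy and in all three items, and for the compactness, equi-coercivity, lower bound, and strong-convergence arguments it follows essentially the same route as the paper (lower bound by testing the maximization principle \eqref{eq:minscpotthm} with a concentrated scalar potential and passing to the rescaled limit, strong convergence by matching the separate $\liminf$ bounds against the $\limsup$ of the sum). The profile $f_\varepsilon$ you choose for the scalar test potential is not the one in the paper --- the paper uses a ``tent'' $\eta_\varepsilon$ that is the identity in the normalized coordinate $|t|<1$ and decays linearly over the whole shell $\Omega_\delta$, whereas you use a steeper profile that vanishes already beyond $\Omega_{\sqrt\varepsilon}$ --- but both make the transition layer contribute $o(\varepsilon)$ to $\int_{\RR^3}|\grad u|^2$, and parametrizing by a general $\phi\in C^1(S)$ and maximizing at the end is equivalent to the paper's direct choice $\phi=\m_0\cdot\tmmathbf n$.

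Where you genuinely diverge from the paper is the \emph{upper bound} for the stray field. The paper uses the new \emph{unconstrained} vector-potential principle \eqref{eq:minvecpotnodiv}, testing with the explicit family $\tmmathbf a^\star_\varepsilon(\xi,t)=\varepsilon\eta_\varepsilon(t)\bigl(\m_0(\xi)\times\tmmathbf n(\xi)\bigr)$ that is the exact vectorial analogue of the scalar tent used for the lower bound; the two estimates then mirror each other and require no embedding theorems. You instead use the \emph{constrained} principle \eqref{eq:minvecpotthm}: you split $\widetilde{\m}_0=\widetilde{\m}_0^\parallel+\widetilde{\m}_0^\perp$, observe that the tangential part carries no surface charge on the two parallel faces of $\Omega_\varepsilon$ because $S$ is closed, estimate its residual magnetostatic energy by the Sobolev inequality $\|v\|_{L^6}\lesssim\|\grad v\|_{L^2}$ on $\mathring H^1(\RR^3)$ together with the smallness $|\Omega_\varepsilon|^{1/3}=O(\varepsilon^{1/3})$, and then invoke Poincar\'e--de~Rham to produce the needed $[\tmmathbf a_\varepsilon]\in\vWonecurl$. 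This is the more classical Gioia--James style of argument; it is correct and self-contained, but it requires the Sobolev embedding and an abstract existence step for $[\tmmathbf a_\varepsilon]$ that the paper's explicit construction bypasses. What the paper's route buys is exactly what the article is advertising: the unconstrained principle \eqref{eq:minvecpotnodiv} turns the construction of the recovery sequence into writing down a concrete local test field of the same tent form as the scalar potential, with no need for the tangential/normal decomposition, no surface-charge discussion, and no Sobolev inequality. What your route buys is a transparent physical interpretation (only the normal component of $\m_0$ produces stray field to leading order) and a construction that does not rely on the novel variational principle of part (iv) of Theorem~\ref{mainthmscvec}. Both are valid.

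One small point to make explicit if you flesh this out: for the ``no surface charge'' claim you need the normal trace of $\widetilde{\m}_0^\parallel$ on $\partial\Omega_\varepsilon$ to make sense and vanish; this works because $\widetilde{\m}_0^\parallel\in H^1(\Omega_\varepsilon;\RR^3)$ (since $\m_0\in H^1(S;\Stwo^2)$ and $S$ is $C^2$, so $\tmmathbf n\in C^1$), so its trace lies in $H^{1/2}(\partial\Omega_\varepsilon)$ and equals $\m_0^\parallel\circ\pi$, which is pointwise orthogonal to the outer normal $\pm\tmmathbf n\circ\pi$ on the two faces and has no lateral boundary to worry about since $S$ is compact without boundary.
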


\begin{proof}
  The first statement is a direct consequence of the boundedness of
  the Dirichlet energy of $(\m)_{\varepsilon \in I_{\delta}^+}$.  The
  equi-corecivity of the family
  $(\mathcal{F}_{\varepsilon})_{\varepsilon \in I_{\delta}^+}$ is
  proved in {\cite{difratta16}}, where it is also proved the
  $\Gamma$-convergence of the Dirichlet energies
  $\mathcal{E}_{\varepsilon}$ to the energy functional
  \begin{equation}
    \mathcal{E}_0 : \m \in H^1( \mathcal{M}; \Stwo^2 ) \mapsto
    \begin{cases} \displaystyle
      \frac12 \int_{\mathcal M} | \nabla_{\xi} \m |^2 \mathd \xi
      \quad & \text{if } \partial_t \m 
      \eqs 0,\\
      + \infty & \text{otherwise} .
    \end{cases} \label{eq:GammaliEprime0}
  \end{equation}
  In particular, if $\tmmathbf{m} \in H^1( \mathcal{M}; \Stwo^2 )$,
  $\m(\xi, \cdot)$ is not constant for a.e.  $\xi \in S$, and
  $\tmmathbf{m}_{\varepsilon} \rightharpoonup \tmmathbf{m}$ weakly in
  $H^1 ( \mathcal{M}; \Stwo^2 )$, then necessarily
  $\limsup_{\varepsilon \rightarrow 0} \mathcal{F}_{\varepsilon}(
  \m_{\varepsilon} ) = + \infty$. Therefore, without loss of
  generality, we can restrict our analysis to families
  $(\tmmathbf{m}_{\varepsilon})_{\varepsilon \in I_{\delta}^+}$ in
  $H^1( \mathcal{M}; \Stwo^2 )$ such that
  $\tmmathbf{m}_{\varepsilon} (\xi, s) \rightharpoonup \m_0 (\xi)
  \chi_I (s)$ for some $\m_0 \in H^1( S, \Stwo^2 )$.
    \begin{figure}[ht]
    \centering
      \includegraphics{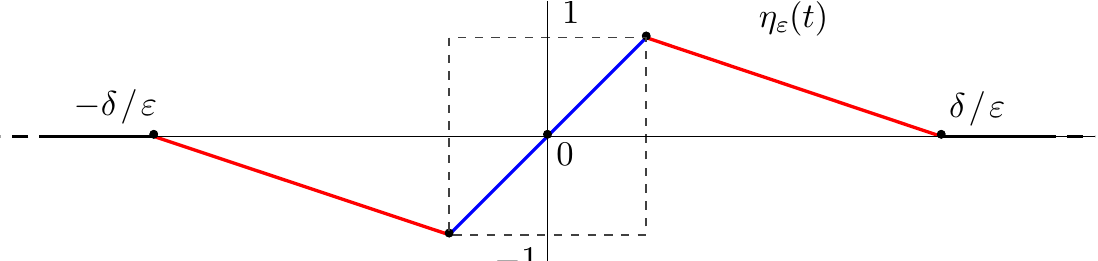}
      \caption{The function $\eta_{\varepsilon}$ used in the construction of
      the family of potentials.\label{Figure1}}
    \end{figure}
    
    \medskip
   \noindent 
   {\tmstrong{Step 1. $\Gamma$-$\mathrm{liminf}$ inequality.}} To
   shorten notation, it is convenient to introduce the
   $\nabla_{\varepsilon} \assign (\mathfrak{h}_{1, \varepsilon}
   \partial_{\tau_1 (\xi)}, \mathfrak{h}_{2, \varepsilon}
   \partial_{\tau_2 (\xi)}, \varepsilon^{-1} \partial_t)$. Then, to
   every
   $\widetilde{\tmmathbf{m}_\varepsilon} \in H^1(
   \Omega_{\varepsilon}, \Stwo^2 )$, $\tilde{u} \in \Wonegradh$, we
   associate the vector field
   $\m_\varepsilon \assign \widetilde{\tmmathbf{m}_\varepsilon} \circ
   \psi_{\varepsilon}$ and the scalar potential
   $u_\varepsilon \assign \tilde{u} \circ \psi_{\varepsilon}$.

   We use the characterization of the magnetostatic sef-energy given
   in Theorem~\ref{mainthmscvec}
   ({\tmabbr{cf.}}~{\eqref{eq:minscpotthm}}). For every $\delta > 0$,
   we denote by $\mathcal{M}_{\delta}$ the product manifold
   $\mathcal{M}_{\delta} \assign S \times I_{\delta}$. We have, with
   the identification of $H^1_0 (\Omega_{\delta})$ as a subspace of
   $\Wonegradh$:
  \begin{eqnarray}
    \frac{1}{2 \varepsilon} \int_{\RR^3} | \grad
    u_{\widetilde{\tmmathbf{m}}_{\varepsilon}} |^2 & \eqs & \max_{\tilde{u}
    \in \Wonegradh} \frac{1}{\varepsilon}
        \left( \int_{\Omega_{\varepsilon}}
    \grad \tilde{u} \cdot \widetilde{\tmmathbf{m}}_{\varepsilon} - \frac{1}{2}
    \int_{\RR^3} \left| \grad \tilde{u} \right|^2 \right) \notag \\
    & \geqslant & \max_{\tilde{u} \in H^1_0 (\Omega_{\delta})}
    \frac{1}{\varepsilon}
    \left( \int_{\Omega_{\varepsilon}} \grad \tilde{u}
    \cdot \widetilde{\tmmathbf{m}}_{\varepsilon} - \frac{1}{2}
    \int_{\Omega_{\delta}} \left| \grad \tilde{u} \right|^2 \right) \notag \\
    & \eqs & \max_{\tilde{u} \in H^1_0 (\Omega_{\delta})}
    \left(
    \int_{\mathcal{M}} \grad_{\varepsilon} [\tilde{u} \circ
    \psi_{\varepsilon}] \cdot \m_{\varepsilon} 
    \sqrt{\mathfrak{g}_{\varepsilon}} - \frac{1}{2} \int_{\mathcal{M}_{\delta
    / \varepsilon}} \left| \grad_{\varepsilon} [\tilde{u} \circ
    \psi_{\varepsilon}] \right|^2 \sqrt{\mathfrak{g}_{\varepsilon}} \right) \notag \\
    & \geqslant & \int_{\mathcal{M}} \grad_{\varepsilon}
    u_{\varepsilon} \cdot \m_{\varepsilon} 
    \sqrt{\mathfrak{g}_{\varepsilon}} - \frac{1}{2}
    \int_{\mathcal{M}_{\delta / \varepsilon}} \left| \grad_{\varepsilon}
    u_{\varepsilon} \right|^2 \sqrt{\mathfrak{g}_{\varepsilon}}, 
    \label{eq:liminfpotmag}
  \end{eqnarray}
  for every $u_{\varepsilon} = \tilde{u} \circ \psi_{\varepsilon}$
  with $\tilde{u} \in H^1_0 (\Omega_{\delta})$. Note that
  $u_\varepsilon$ is well defined on
  $\mathcal M_{\delta/\varepsilon}$. Next, we build the family of
  potentials ({\tmabbr{cf.}}~Figure~\ref{Figure1})
  \begin{equation}
    u_{\varepsilon} (\xi, t) \assign \varepsilon
    \eta_{\varepsilon} (t) ( \m_0 (\xi) 
    \cdot \tmmathbf{n} 
    (\xi) ), \qquad \eta_\varepsilon (t) \assign \left\{
    \begin{array}{ll}
      t & \text{if } | t | < 1,\\
      \frac{\delta - \varepsilon | t |}{
      \delta - \varepsilon}
        & \text{if
          } 1 \leqslant | t | < \delta / \varepsilon,\\
      0 & \text{if } | t | > \delta / \varepsilon .
    \end{array} \right. \label{eq:eta}
  \end{equation}
  Note that $\eta_{\varepsilon} (t) = 0$ if $| t | > \delta / \varepsilon >
  1$. Also we have
  \begin{equation}
    \eta_{\varepsilon}' (t) = 1 \quad \mathrm{if \ } | t | < 1, \qquad
    (\eta'_{\varepsilon} (t))^2 = \frac{\varepsilon^2}{(\delta -
    \varepsilon)^2} \quad \mathrm{if \ } 1 < | t | < \delta / \varepsilon .
  \end{equation}
  Hence, we have
  $\grad_{\xi} u_{\varepsilon} (\xi, t) = \varepsilon
  \eta_{\varepsilon} (t) \grad_{\xi}( \m_0 (\xi) \cdot \tmmathbf{n}
  (\xi) )$ and
  $\partial_t u_{\varepsilon} (\xi, t) = \varepsilon
  \eta'_{\varepsilon} (t) ( \m_0 (\xi) \cdot \tmmathbf{n} (\xi) )$. It
  follows that
  $\left\| \grad_{\xi} u_{\varepsilon}
  \right\|^2_{\mathcal{M}_{_{\delta / \varepsilon}}} \rightarrow 0$ as
  $\varepsilon \rightarrow 0$.  Therefore, from
  {\eqref{eq:liminfpotmag}} and \eqref{eq:eta} we obtain
  \begin{equation}
    \liminf_{\varepsilon \rightarrow 0} \frac{1}{2 \varepsilon} \int_{\RR^3}
    | \grad u_{\widetilde{\tmmathbf{m}}_{\varepsilon}} |^2 \; \geqslant
    \; \int_{\mathcal{M}}( \m_0 \cdot \tmmathbf{n} )^2 -
    \frac{1}{2} \limsup_{\varepsilon \rightarrow 0} \int_{\mathcal{M}_{\delta /
        \varepsilon}}( \m_0 \cdot \tmmathbf{n} )^2
    (\eta'_{\varepsilon} (t))^2 \mathd \xi \mathd t.
  \end{equation}
  On the other hand, we have 
  \begin{eqnarray}
    \int_{\mathcal{M}_{\delta / \varepsilon}}( \m_0 (\xi) \cdot \tmmathbf{n}
    )^2 (\eta'_{\varepsilon} (t))^2 \mathd \xi \mathd t & \eqs & \left(
    1 + \frac{\varepsilon}{\delta - \varepsilon} \right) \int_{\mathcal{M}}
   ( \m_0 \cdot \tmmathbf{n} )^2 \xrightarrow{\varepsilon
    \rightarrow 0} \int_{\mathcal{M}}( \m_0 \cdot \tmmathbf{n} )^2
    . 
  \end{eqnarray}
  Summarizing, we get 
  \begin{equation}
    \liminf_{\varepsilon \rightarrow 0} \frac{1}{2 \varepsilon} \int_{\RR^3}
    | \grad u_{\widetilde{\tmmathbf{m}}_{\varepsilon}} |^2 \; \;
    \geqslant \; \frac{1}{2} \int_{\mathcal{M}}( \m_0 \cdot \tmmathbf{n}
    )^2  \eqs \int_S( \m_0 \cdot \tmmathbf{n} )^2 .
    \label{eq:temp3}
  \end{equation}
  Taking into account {\eqref{eq:GammaliEprime0}}, we conclude that for any
  $(\tmmathbf{m}_{\varepsilon})_{\varepsilon \in I_{\delta}}$ in $H^1(
  \mathcal{M}; \Stwo^2 )$ such that $\tmmathbf{m}_{\varepsilon} (\xi, s)
  \rightharpoonup \m_0 (\xi) \chi_I (s)$ for some $\m_0 \in H^1( S,
  \Stwo^2 )$, the following lower bound holds
  \begin{eqnarray}
    \liminf_{\varepsilon \rightarrow 0} \mathcal{F}_{\varepsilon}(
    \m_{\varepsilon} ) & \geqslant & \frac{1}{2} \int_{\mathcal{M}}
    \left| \grad_{\xi}  \m_0 \right|^2 + \frac{1}{2} \int_{\mathcal{M}}(
    \m_0 \cdot \tmmathbf{n} )^2 .  \label{eq:minprobauxsp}
  \end{eqnarray}

\medskip
   \noindent 
    {\tmstrong{Step 2. Recovery sequence.}} We now show that, for any
    $\tmmathbf{m}_0 \in H^1( S, \Stwo^2 )$, the constant family of
    magnetizations given by $\tmmathbf{m}_{\varepsilon} (\xi, t) \assign
    \tmmathbf{m}_0 (\xi) \chi_I (t)$ defines a recovery sequence. It is clear
    that such a family of functions works for the exchange energies
    $\mathcal{E}_{\varepsilon}$ due to {\eqref{eq:GammaliEprime0}}. Therefore,
    we can focus on the magnetostatic self-energy. To shorten notation, it is
    convenient to introduce the symbol $\curl_{\varepsilon}
    \tmmathbf{a}^{\star} \assign \curl_{\varepsilon, \xi} \tmmathbf{a}^{\star}
    + \varepsilon^{- 1} \tmmathbf{n} \times \partial_t \tmmathbf{a}^{\star}$
    with
  \begin{equation}
    \curl_{\varepsilon, \xi} \tmmathbf{a}^{\star} = \sum_{i = 1}^2
    \mathfrak{h}_{i, \varepsilon} (\xi, t) \left(\tmmathbf{\tau}_i (\xi) \times
    \partial_{\tau_i (\xi)} \tmmathbf{a}^{\star}\right) .
  \end{equation}
  By the expression of the magnetostatic self-energy in terms of
  magnetic vector potential given in Theorem~\ref{mainthmscvec}
  ({\tmabbr{cf.}}~{\eqref{eq:minvecpotnodiv}}), we have
  \begin{eqnarray}
    \frac{1}{2 \varepsilon} \int_{\RR^3} \left| \grad
    u_{\widetilde{\tmmathbf{m}}_{\varepsilon}} \right|^2
    & \eqs &
             \frac{1}{2 \varepsilon} \,
             \underset{\tilde{\tmmathbf{a}}^{\star} \in
             \mathring{H}^1( \RR^3; \RR^3 
             )}{\min} \int_{\RR^3} \left( \left| \grad
             \tilde{\tmmathbf{a}}^{\star} \right|^2 + |
             \widetilde{\tmmathbf{m}}_{\varepsilon} |^2 - 2 \curl 
             \tilde{\tmmathbf{a}}^{\star} \cdot \widetilde{\tmmathbf{m}}_{\varepsilon} 
             \right) \notag \\
    & \leqslant & \underset{\tilde{\tmmathbf{a}}^{\star} \in {H}^1_0
                  (\Omega_{\delta},\RR^3)}{\min} \left( \frac{1}{2} \int_{\mathcal{M}_{\delta /
                  \varepsilon}} \left| \grad_{\varepsilon}  [\tilde{\tmmathbf{a}}^{\star}
                  \circ \psi_{\varepsilon}] \right|^2 \sqrt{\mathfrak{g}_{\varepsilon}} \right. +
                  \nonumber\\
    &  & \quad \quad \quad \quad \quad \quad \quad + \left. \frac{1}{2}
         \int_{\mathcal{M}}( \left| \m_{\varepsilon} \right|^2 - 2
         \curl_{\varepsilon}  [\tilde{\tmmathbf{a}}^{\star} \circ
         \psi_{\varepsilon}] \cdot \m_{\varepsilon} )
         \sqrt{\mathfrak{g}_{\varepsilon}}  \right) \notag \\
    & \leqslant & \frac{1}{2} \int_{\mathcal{M}} \left( \left|
                  \m_{\varepsilon} \right|^2 - 2 \curl_{\varepsilon}
                  \tmmathbf{a}^{\star} \cdot \m_{\varepsilon} \right)
                  \sqrt{\mathfrak{g}_{\varepsilon}} + \frac{1}{2} \int_{\mathcal{M}_{\delta
                  / \varepsilon}} \left| \grad_{\varepsilon}
                  \tmmathbf{a}^{\star} \right|^2 \sqrt{\mathfrak{g}_{\varepsilon}}, 
  \end{eqnarray}
  for every $\tmmathbf{a}^{\star} = \tilde{\tmmathbf{a}}^{\star} \circ
  \psi_{\varepsilon}$ with $\tilde{\tmmathbf{a}}^{\star} \in {H}^1_0
  (\Omega_{\delta},\RR^3)$. Next, we consider the family of potentials
  \begin{equation}
    \tmmathbf{a}^{\star}_{\varepsilon} (\xi, t) \assign
    \varepsilon \eta_{\varepsilon} (t) ( \m_0 (\xi)
    \times \tmmathbf{n} (\xi) ),
  \end{equation}
  with $\eta_{\varepsilon}$ given by {\eqref{eq:eta}}. We get that
  \begin{equation}
    \grad_{\xi} \tmmathbf{a}^{\star}_{\varepsilon} (\xi, t) =
    \varepsilon \eta_{\varepsilon} (t)  \grad_{\xi}
    ( \m_0 (\xi) \times \tmmathbf{n} (\xi) ), \quad \partial_t
    \tmmathbf{a}^{\star}_{\varepsilon} (\xi, t) = \varepsilon
    \eta'_{\varepsilon} (t)  (\m_0 (\xi) \times \tmmathbf{n} (\xi)
    ) . 
  \end{equation}
  Hence, we have
  $\left\| \grad_{\xi} \tmmathbf{a}^{\star}_{\varepsilon}
  \right\|^2_{\mathcal{M}_{_{\delta / \varepsilon}}} \rightarrow 0$ as
  $\varepsilon \rightarrow 0$. Therefore
  \begin{eqnarray}
    \limsup_{\varepsilon \rightarrow 0} \frac{1}{2 \varepsilon} \int_{\RR^3}
    \left| \grad u_{\widetilde{\tmmathbf{m}}_{\varepsilon}} \right|^2
    & \leqslant
    & \frac{1}{2} \int_{\mathcal{M}} \left| \m_0 \right|^2 -
      \int_{\mathcal{M}} \left[ 
      \tmmathbf{n} \times( \m_0 \times \tmmathbf{n} ) \right] \cdot
      \m_0 \notag \\
    &  & \quad \hspace{3em} \quad + \limsup_{\varepsilon \rightarrow 0}
         \left( \frac{1}{2} \int_{\mathcal{M}_{\delta / \varepsilon}} \left|(
         \m_0 \times \tmmathbf{n} ) \eta_{\varepsilon}' (t) \right|^2 \mathd
         t \,\right) . 
  \end{eqnarray}
  Moreover, we have
  \begin{eqnarray}
    \int_{\mathcal{M}_{\delta / \varepsilon}} \left|( \m_0 \times
    \tmmathbf{n} ) \eta_{\varepsilon}' (t) \right|^2 \mathd t
    & \eqs &
             \left( 1 + \frac{\varepsilon}{\delta - \varepsilon} \right)
             \int_{\mathcal{M}} \left| \m_0 \times \tmmathbf{n} \right|^2 \;
             \xrightarrow{\varepsilon \rightarrow 0} \int_{\mathcal{M}} \left| \m_0
             \times \tmmathbf{n} \right|^2 . 
  \end{eqnarray}
  Summarizing, we get 
  \begin{eqnarray}
    \limsup_{\varepsilon \rightarrow 0} \frac{1}{2 \varepsilon} \int_{\RR^3}
    \left| \grad u_{\widetilde{\tmmathbf{m}}_{\varepsilon}} \right|^2
    & \leqslant
    & \frac{1}{2} \int_{\mathcal{M}} \left| \m_0 \right|^2 - \int_{\mathcal{M}}  \left[
      \tmmathbf{n} \times( \m_0 \times \tmmathbf{n} ) \right] \cdot
      \m_0 + \frac{1}{2} \int_{\mathcal{M}} \left| \m_0 \times \tmmathbf{n}
      \right|^2 \notag \\
    & \eqs & \frac{1}{2} \int_{\mathcal{M}}( \m_0 \cdot \tmmathbf{n}
             )^2 . 
  \end{eqnarray}
  Strong convergence of minimizers $\m_\varepsilon \to \m_0$ in
  $H^1(\mathcal M; \Stwo^2)$ follows from weak convergence in
  $H^1(\mathcal M; \Stwo^2)$ and convergence of the norms
\begin{equation}
  \int_{\mathcal{M}} \sum_{i \in \NN_2} |  \partial_{\tau_i (\xi)}
  \tmmathbf{m}_\varepsilon |^2  
  +  \int_{\mathcal{M}} | \partial_t \tmmathbf{m}_\varepsilon |^2 \to
  \int_{\mathcal M} | \nabla_{\xi} \m_0 |^2,
\end{equation}
where the latter is a straightforward consequence of
$\mathcal E_\varepsilon (\m_\varepsilon) \to \mathcal E_0(\m)$ for a
minimizing sequence $(\m_\varepsilon)$.  This completes the proof.
\end{proof}

\section*{Acknowledgements}

G.\,D.\,F. acknowledges support from the Austrian Science Fund (FWF)
through the special research program ``Taming complexity in partial
differential systems'' (Grant SFB F65) and of the Vienna Science and
Technology Fund (WWTF) through the research project ``Thermally
controlled magnetization dynamics'' (Grant MA14-44). C.\,B.\,M. was
supported, in part, by NSF via grants DMS-1614948 and DMS-1908709. The
work of F.\,N.\,R. was supported by the Swedish Research Council Grant
No. 642-2013-7837 and by G\"{o}ran Gustafsson Foundation for Research
in Natural Sciences and Medicine. V.\,V.\,S. acknowledges support from
Leverhulme grant RPG-2018-438. G.\,D.\,F., C.\,B.\,M. and
V.\,V.\,S. would also like to thank the Isaac Newton Institute for
Mathematical Sciences for support and hospitality during the program
``The mathematical design of new materials'' supported by EPSRC via
grants EP/K032208/1 and EP/R014604/1.

\bibliographystyle{siam}
\bibliography{nonlin.bib,mura.bib,FNR.bib}

\end{document}